\newtheorem{prop}{Proposition}
\newtheorem{thm}{Theorem}
\newtheorem{lemma}[prop]{Lemma}
\newtheorem{cor}[prop]{Corollary}
\newcommand{\hm}[1]{#1\nobreak\discretionary{}{\hbox{\ensuremath{#1}}}{}}
\renewenvironment{proof}{\trivlist\item[\hskip\labelsep{\bf Proof}.]}{\qed\endtrivlist}
\newenvironment{proof*}{\trivlist\item[\hskip\labelsep{\bf Proof}]}{\qed\endtrivlist}
\newenvironment{proof1}{\trivlist\item[\hskip\labelsep{\bf Proof of Theorem~\ref{basis1}}.]}{\qed\endtrivlist}
\theoremstyle{definition}
\newtheorem{note}{Remark}
\newcommand{\VM}[1]{\textsc{Var-Memb}(#1)}
\DeclareMathOperator{\var}{var}
\DeclareMathOperator{\alf}{alph}
\begin{document}
\title{A minimal nonfinitely based semigroup
whose variety is polynomially recognizable}
\author{M.\,V.\,Volkov, S.\,V.\,Goldberg\\
\emph{Ural State University}\\
email: \{Mikhail.Volkov,Svetlana.Goldberg\}@usu.ru\\[.5em]
S.\,I.\,Kublanovsky\\
\emph{TPO ``Severny Ochag''}\\
email: stas@norths.spb.su}

\date{}

\maketitle

\begin{abstract}
We exhibit a 6-element semigroup that has no finite identity basis
but nevertheless generates a variety whose finite membership
problem admits a polynomial algorithm.
\end{abstract}

\renewcommand{\abstractname}{}

\begin{abstract}
\textbf{Keywords:} semigroup, identity, variety, pseudovariety,
finite basis property, membership problem, polynomial algorithm
\end{abstract}

\section{Motivation and overview}

Developments in the theory of computational complexity have shed
new light on algorithmic aspects of algebra. It has turned out
that many basic algorithmic questions whose decidability is well
known and/or obvious give rise to fascinating and sometimes very
hard problems if one looks for the computational complexity of
corresponding algorithms\footnote{In this paper complexity  is
understood in the sense of the monographs~\cite{GJ82,Pa94}; the
reader can find there the definitions of the complexity classes
\textsf{P}, \textsf{NP}, \textsf{EXPSPACE}, and \textsf{2-EXPTIME}
that are mentioned below.}. A good example is provided by the
problem \textsc{Var-Memb} studied in this paper: \emph{given two
finite algebras $A$ and $B$ of the same similarity type, decide
whether or not the algebra $A$ belongs to the variety generated by
the algebra $B$}. Clearly, the problem \textsc{Var-Memb} is of
importance for universal algebra in which equational
classification of algebras is known to play a central role. At the
same time, the problem is of interest in computer science and, in
particular, in formal specification theory
(cf.~\cite[Section~1]{BS00}) and in formal language theory (see
discussion below). The fact that the problem \textsc{Var-Memb} is
decidable easily follows from Tarski's HSP-theorem and has already
been mentioned in Kalicki's pioneering paper~\cite{Ka52}. The
question about computational complexity of this problem has been
explicitly posed much later, namely, in Kharlampovich and Sapir's
well-known survey, see~\cite[Problem~2.5]{KS95}. A systematic
study of this question has then started and brought interesting
and rather unexpected results.  Bergman and Slutzki~\cite{BS00}
extracted an upper bound from an analysis of Kalicki's proof: the
problem \textsc{Var-Memb} belongs to the class \textsf{2-EXPTIME}
of problems solvable in double exponential time.  For some time it
appeared that this bound was very loose but then
Szekely~\cite{Sz02} showed that the problem is \textsf{NP}-hard,
and Kozik~\cite{Ko04,Ko07} proved that it is even
\textsf{EXPSPACE}-hard. Finally, Kozik~\cite{Kozik} has shown that
the problem  \textsc{Var-Memb} is \textsf{2-EXPTIME}-complete,
thus confirming that the bound by Bergman and Slutzki in general
is tight. Thus, the problem \textsc{Var-Memb} has turned out to be
one of the hardest algorithmic problems of universal algebra.

The problem \textsc{Var-Memb} is of a special interest for
semigroups in the view of the well-known Eilenberg
correspondence~\cite{Ei76} between varieties of regular language
and pseudovarieties of semigroups\footnote{Recall that a
\emph{semigroup pseudovariety} is a class of finite semigroups
closed under taking subsemigroups and homomorphic images and under
forming finitary direct products. In particular, the class
$\mathcal{V}_\mathrm{fin}$ of all finite semigroups in a given
variety $\mathcal{V}$ is a  pseudovariety.}. By this
correspondence, a regular language belongs to some language
variety  $\mathbf{L}$ if and only if the syntactic semigroup of
the language belongs to the pseudovariety of semigroups
corresponding to $\mathbf{L}$. Therefore, estimating complexity of
the semigroup version of \textsc{Var-Memb}, one can deduce
conclusions about computational complexity  of some important
problems in formal language theory. At the present moment, the
precise complexity of the problem \textsc{Var-Memb} for semigroups
has not yet been determined but it is known that the problem is
\textsf{NP}-hard (Jackson and McKenzie~\cite{JM06}). In what
follows, we concentrate on the problem \textsc{Var-Memb} for
semigroups.

A reasonable strategy for analyzing \textsc{Var-Memb} in detail
consists in fixing the semigroup $B$ as a parameter so that the
role of an input is played by the semigroup $A$ only. This
approach splits \textsc{Var-Memb} into a series of problems
\VM{$B$} that are parameterized by finite semigroups and leads to
the question of classifying finite semigroups with respect to
computational complexity of the membership problem for the
varieties these semigroups generate. Let us proceed with precise
definitions.

Let $B$ be an arbitrary but fixed finite semigroup and let
$\var{B}$ be the variety generated by $B$. The problem \VM{$B$} is
a combinatorial decision problem whose instance is an arbitrary
finite semigroup $A$, and the answer to the instance $A$ is
``YES'' or ``NO'' depending on whether or not $A$ belongs to the
variety $\var{B}$. If there exist a deterministic Turing machine
and a polynomial $p(x)$ with integer coefficients, both depending
on the semigroup $B$ only, such that for every finite semigroup
$A$, the machine decides in time at most $p(|A|)$ whether or not
$A$ belongs to the variety $\var{B}$, then we say that $B$ is a
\emph{semigroup with polynomially recognizable variety}.
Similarly, if there is no such polynomial, but there exists a real
constant $\alpha>1$ such that for every finite semigroup $A$, the
machine decides the same question in time at most $\alpha^{|A|}$,
then we say that $B$ is a \emph{semigroup with exponentially
recognizable variety}, etc. The classification question mentioned
in the previous paragraph is essentially the question of an
efficient characterization of finite semigroups with polynomially
(exponentially etc.) recognizable varieties. We notice that
Jackson and McKenzie~\cite{JM06} have exhibited a 56-element
semigroup $J{\kern-1pt}M$ for which the problem
\VM{$J{\kern-1pt}M$} is \textsf{NP}-hard. This means that under
the standard assumption $\mathsf{P}\hm\ne\mathsf{NP}$, the
semigroup $J{\kern-1pt}M$ is not a semigroup with polynomially
recognizable variety.

Semigroup with polynomially recognizable varieties could be
alternatively called semigroups with easily verifiable identities.
Indeed, by the definition the variety $\var{B}$ consists of all
semigroups satisfying every identity holding in $B$, whence
testing membership of a given semigroup $A$ in the variety
$\var{B}$ is nothing but testing whether $A$ satisfies every
identity  of the semigroup $B$. This observation immediately
implies a simple but important conclusion:
\begin{lemma}
\label{lemma 1.1} If all identities holding in a semigroup $B$
follow from a finite set $\Sigma$ of such identities, then $B$
generates a polynomially recognizable variety.
\end{lemma}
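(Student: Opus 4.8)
The plan is to combine the observation recorded just before the lemma --- that a finite semigroup $A$ lies in $\var{B}$ exactly when $A$ satisfies all identities of $B$ --- with the hypothesis that these identities are all consequences of the finite set $\Sigma$, and then to verify satisfaction of $\Sigma$ by brute force.

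First I would note that, since every identity of $B$ follows from $\Sigma$ while conversely every member of $\Sigma$ holds in $B$, a semigroup satisfies all identities of $B$ if and only if it satisfies every identity in $\Sigma$. Hence, by the preceding discussion, a finite semigroup $A$ belongs to $\var{B}$ if and only if $A$ satisfies each of the identities $u_1\approx v_1,\dots,u_n\approx v_n$ that constitute $\Sigma$.

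Next I would describe and analyze the obvious verification algorithm. Let $k$ be the maximum number of distinct variables occurring in a single identity from $\Sigma$, and let $\ell$ be the maximum length of a word occurring in $\Sigma$; the numbers $n$, $k$, $\ell$ are determined by $B$ alone and do not depend on the input $A$. Given $A$ --- presented, say, by its multiplication (Cayley) table --- the algorithm runs through $i=1,\dots,n$ and, for each $i$, through all at most $|A|^k$ assignments of the variables of $u_i\approx v_i$ to elements of $A$; for each assignment it evaluates the two sides (each at the cost of at most $\ell-1$ table lookups, performed left to right) and compares the two values. It answers ``YES'' precisely when equality holds throughout. Correctness is immediate from the previous paragraph, and the running time is $O\big(n\,|A|^{k}\,\ell\big)$ table lookups, hence bounded above by a polynomial $p(|A|)$ whose degree $k$ and coefficients depend only on $B$. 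This is exactly what it means for $B$ to have polynomially recognizable variety.

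There is no genuine obstacle here; the only points worth flagging are bookkeeping ones --- making sure that $n$, $k$, $\ell$ are treated as constants tied to $B$, so that the exponent in $|A|^k$ is truly independent of $A$ --- together with the routine remark that, in a Turing-machine model, index arithmetic and table lookups contribute only a polylogarithmic overhead that the polynomial absorbs.
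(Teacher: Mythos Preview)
Your argument is correct and follows essentially the same approach as the paper: reduce membership in $\var{B}$ to checking the finitely many identities in $\Sigma$, then verify each by exhaustive substitution, observing that the number of identities, the number of variables per identity, and the word lengths are constants depending only on $B$. Your version is slightly more explicit about the constants $n$, $k$, $\ell$ and the Turing-machine overhead, but the content is the same.
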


\begin{proof}
Under the premise of the lemma, in order to check whether or not a
given finite semigroup $A$ belongs to the variety $\var{B}$, it
suffices to check whether or not $A$ satisfies all identities in
$\Sigma$. To check that an identity $u=v$ in $\Sigma$ holds in
$A$, provided that $u$ and $v$ together depend on $m$ variables,
one can simply substitute for the variables all possible
$m$-tuples of elements of $A$ and then check whether or not all
substitutions yield equal values to the words $u$ and $v$. The
number of $m$-tuples subject to the evaluation is $|A|^m$ while
time needed to calculating the values of the words $u$ and $v$
depends only on the length of these words and not on the size of
the semigroup $A$. Hence the total time consumed by this algorithm
is bounded by a polynomial of degree $m$ in $|A|$. Since the
number of identities in $\Sigma$ also does not depend on the size
of $A$, we see that the inclusion $A\in\var{B}$ can be verified in
polynomial in $|A|$ time.
\end{proof}

A semigroup that satisfies the premise of Lemma~\ref{lemma 1.1} is
said to be \emph{finitely based}. The question which finite
semigroups are finitely based and which are not plays a central
role in the theory of semigroup varieties for more than 40~year,
see~\cite{Vo01} for a survey of the corresponding area.
Lemma~\ref{lemma 1.1} links this question and the problem of
characterizing finite semigroups with polynomially recognizable
varieties.

It is easy to see that in general a polynomially recognizable
variety need not be finitely based. Here the variety
$\mathcal{B}_4\mathcal{B}_2$ of all extensions of groups of
exponent~4 by groups of exponent~2 studied by Kleiman~\cite{Kl73}
can serve as a simple example. (Since this class consists of
periodic groups, it also forms a semigroup variety.) Indeed, it is
shown in~\cite{Kl73} that the variety $\mathcal{B}_4\mathcal{B}_2$
is non\-finitely based. On the other hand, if $A$ is a finite
semigroup, then in order to test the membership of $A$ in
$\mathcal{B}_4\mathcal{B}_2$, it suffices to test whether or not
$A$ is a group, and if this is the case, to check whether or not
the normal subgroup generated by all squares in $A$ has
exponent~4. Clearly, both these checks can be performed in
polynomial (in fact, cubic) in $|A|$ time.

The situation changes essentially if one considers a variety
generated by a finite semigroup. Here one cannot find a similar
example among varieties consisting only of groups because by a
classic result by Oates and Powell~\cite{OP64} every finite group
is finitely based. In~\cite[Theorem~3.53]{KS95} the authors
describe a certain semigroup variety $\mathcal{S}$ and claim that
$\mathcal{S}$ is polynomially recognizable and that one can deduce
from Sapir's result~\cite{Sa91} that $\mathcal{S}$ is
non\-finitely based and is generated by a finite semigroup.
However, an algorithm for testing membership of a finite semigroup
in the variety $\mathcal{S}$ is described in neither~\cite{KS95}
nor subsequent publications; no finite semigroup generating
$\mathcal{S}$ is explicitly exhibited. Moreover, the reference
to~\cite{Sa91} does not appear to be fully legitimate because it
is clear from the description of the variety $\mathcal{S}$
in~\cite[Theorem~3.53]{KS95} that all groups in $\mathcal{S}$ have
exponent~4 while semigroup varieties analyzed in~\cite{Sa91}
contain only groups of odd exponent. We do not doubt the validity
of the claim announced in~\cite[Theorem~3.53]{KS95}, but we
believe nevertheless that in order to clarify the relationship
between the properties of a finite semigroup ``to be finitely
based'' and ``to generate a polynomially recognizable variety'',
it is worthwhile to provide a more transparent example with
complete justification. This is the goal of the present paper.

We exhibit a 6-element semigroup $A{\kern-1pt}C_2$ that is
non\-finitely based and at the same time generates a polynomially
recognizable variety. We explicitly write down an infinite
identity basis for $A{\kern-1pt}C_2$ and describe in detail a
polynomial algorithm  for testing membership of an arbitrary
finite semigroup in the variety $\var A{\kern-1pt}C_2$.

We notice that our example has the minimum possible number of
elements because it is well known that every semigroup with five
or fewer elements is finitely based~\cite{Tr83,Tr91}.
Surprisingly, it seems that the semigroup $A{\kern-1pt}C_2$ has
not yet appeared in the literature. The reader may be aware of the
other 6-element non\-finitely based semigroup, the so-called
\emph{Brandt monoid} $B_2^1$ formed by the following $2\times
2$-matrices
$$ \begin{pmatrix}
0 & 0 \\ 0 & 0
\end{pmatrix}, ~~
\begin{pmatrix}
1 & 0 \\ 0 & 1
\end{pmatrix}, ~~
\begin{pmatrix}
1 & 0 \\ 0 & 0
\end{pmatrix}, ~~
\begin{pmatrix}
0 & 1 \\ 0 & 0
\end{pmatrix}, ~~
\begin{pmatrix}
0 & 0 \\ 1 & 0
\end{pmatrix}, ~~
\begin{pmatrix}
0 & 0 \\ 0 & 1
\end{pmatrix}$$
under usual matrix multiplication. Since the pioneering paper by
Perkins~\cite{Pe69}, the Brandt monoid appears over and over again
in publications on the theory of semigroup varieties for more than
40~years. It is known that $B_2^1$ has many remarkable properties
(including those related to computational complexity,
see~\cite{Se05,Klima}) but the question about the complexity of
the problem \VM{$B_2^1$} still remains open (and is very
intriguing in our opinion). Therefore at the moment one cannot use
the Brandt monoid as the example we are looking for.

A further interesting property of the semigroup $A{\kern-1pt}C_2$
is that $\var A{\kern-1pt}C_2$ is a \emph{limit} variety, that is,
a minimal (under class inclusion) non\-finitely based variety,
see~\cite{LeeVolkov}. Thus, our example is minimal not only with
respect to the number of elements but also with respect to the
natural ordering of varieties.

The paper is structured as follows. In Section~2 we construct the
semigroup $A{\kern-1pt}C_2$, establish its identity basis, and
give a structural characterization of semigroups in the variety
$\var A{\kern-1pt}C_2$. In Section~3 we show how to use this
characterization in order to check, given a finite semigroup $S$,
whether or not $S\in\var A{\kern-1pt}C_2$ in time $O(|S|^3)$.

We assume the reader's acquaintance with rudiments of semigroup
theory up to the first three chapters of the monograph~\cite{ClPr}
as well as with some basics of the theory of varieties,
see~\cite[Chapter~II]{BuSa81}. For the reader's convenience we
recall here the notion of a Rees matrix semigroup which is
important for the present paper.

Let $G$ be a group, 0 be a symbol not in $G$. Further, let
$I,\Lambda$ be non-empty sets and $P=(p_{\lambda,i})$ be a
$\Lambda\times I$-matrix over $G\cup\{0\}$. The \emph{Rees matrix
semigroup $M^0(G; I, \Lambda; P)$ over the group $G$ with the
sandwich-matrix $P$} is the semigroup on the set $(I\times G\times
\Lambda)\cup \{0\}$ under multiplication
\begin{gather*}
x\cdot 0=0\cdot x=0\ \text{ for all $x\in (I\times G\times \Lambda)\cup \{0\}$},\\
(i,g,\lambda)\cdot(j,h,\mu)=\begin{cases}0&\mbox{ if }p_{\lambda,j}=0,\\
(i,gp_{\lambda,j}h,\mu)&\mbox{ if }p_{\lambda,j}\ne0.
\end{cases}
\end{gather*}

\section{The semigroup $A{\kern-1pt}C_2$ and its identity basis}

Let $A_2$ denote the 5-element idempotent-generated 0-simple
semigroup. It can be defined in the class of semigroups with zero
by the following presentation:
$$A_2=\langle a,b\mid a^2=aba=a,\ bab=b,\ b^2=0\rangle=\{a,b,ab,ba,0\}.$$
The semigroup $A_2$ can be also thought of as the semigroup formed
by the following $2\times2$-matrices (over an arbitrary field)
$$\begin{pmatrix}
0 & 0\\ 0 & 0
\end{pmatrix},\
\begin{pmatrix}
1 & 0\\ 0 & 0
\end{pmatrix},\
\begin{pmatrix}
0 & 1\\ 0 & 0
\end{pmatrix},\
\begin{pmatrix}
1 & 0\\ 1 & 0
\end{pmatrix},\
\begin{pmatrix}
0 & 1\\ 0 & 1
\end{pmatrix}$$
under the usual multiplication of matrices or as the Rees matrix
semigroup over the trivial group $E=\{1\}$ with the
sandwich-matrix $\left(\begin{smallmatrix}1 & 1\\ 0 &
1\end{smallmatrix}\right)$.

The semigroup $A{\kern-1pt}C_2$ is obtained by adding a new
element $c$ to the semigroup $A_2$. The multiplication in
$A{\kern-1pt}C_2$ extends the multiplication in $A_2$ and the
products involving the added element $c$ are defined as follows:
$$ c^2 = 0\ \text{ and }\ xc = cx = c\  \text{ for all $x\in A_2$}.$$
(In order to avoid any confusion, we stress that the element 0 is
no longer a zero in $A{\kern-1pt}C_2$ since $0c=c0=c$.) The fact
that the multiplication defined this way is associative can be
straightforwardly verified but can be also obtained without
calculations if one observes that the groupoid $A{\kern-1pt}C_2$
is isomorphic to a subsemigroup of the direct product of the
semigroup $A_2$ with the cyclic group $C_2=\langle c\mid
c^2=1\rangle=\{c,1\}$, namely, to the subsemigroup consisting of
all the pairs of the form $(x,1)$, where $x\in A_2$, and the pair
$(0,c)$.

By the construction, $A_2$ is a subsemigroup in $A{\kern-1pt}C_2$;
on the other hand, the elements 0 and $c$ form in
$A{\kern-1pt}C_2$ a subgroup isomorphic to the group $C_2$. (Thus,
$A{\kern-1pt}C_2$ is obtained via amalgamating $A_2$ and $C_2$
such that the zero of the semigroup $A_2$ is identified with the
identity element of the group $C_2$.) Since $A_2,C_2\in\var
A{\kern-1pt}C_2$, we have $A_2\times C_2\in\var A{\kern-1pt}C_2$.
Conversely, as mentioned above, the semigroup $A{\kern-1pt}C_2$
embeds into the direct product $A_2\times C_2$ whence
$A{\kern-1pt}C_2\in\var(A_2\times C_2)$. We see that the
semigroups $A{\kern-1pt}C_2$ and $A_2\times C_2$ generate the same
variety, in other words, they satisfy the same identities. It
follows from the results of~\cite{Vo89} (cf.\ Remark~2 in the
discussion of the main theorem there) that for every group $G$ of
finite exponent the direct product $A_2\times G$ is non\-finitely
based. Hence we obtain the first property of the semigroup
$A{\kern-1pt}C_2$ we need.

\begin{lemma}
\label{nfb} The semigroup $A{\kern-1pt}C_2$ is non\-finitely
based.
\end{lemma}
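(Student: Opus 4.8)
The plan is to reduce the claim to a known non-finite-basis result via the structural analysis already assembled in the excerpt. The key observation, which the text has carefully prepared, is that $A\kern-1pt C_2$ and $A_2\times C_2$ generate the same variety, so they have the same identities; hence $A\kern-1pt C_2$ is finitely based if and only if $A_2\times C_2$ is. Thus it suffices to argue that $A_2\times C_2$ is non-finitely based.

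\medskip
First I would invoke the cited results of \cite{Vo89}: as noted in the excerpt, it follows from those results (in particular Remark~2 in the discussion of the main theorem there) that for every group $G$ of finite exponent the direct product $A_2\times G$ is non-finitely based. Applying this with $G=C_2$, which is certainly a group of finite exponent, we conclude that $A_2\times C_2$ is non-finitely based.

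\medskip
Second, I would close the loop. Since $A\kern-1pt C_2$ embeds into $A_2\times C_2$, we have $A\kern-1pt C_2\in\var(A_2\times C_2)$; conversely $A_2,C_2\in\var A\kern-1pt C_2$ gives $A_2\times C_2\in\var A\kern-1pt C_2$. Hence $\var A\kern-1pt C_2=\var(A_2\times C_2)$, so the two semigroups satisfy exactly the same identities, and in particular any finite identity basis for one would be a finite identity basis for the other. As $A_2\times C_2$ admits no finite identity basis, neither does $A\kern-1pt C_2$, which is the assertion of the lemma.

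\medskip
Since every ingredient is already laid out in the preceding paragraph of the excerpt, there is no real obstacle here: the proof is essentially a two-line deduction combining the variety-equality $\var A\kern-1pt C_2=\var(A_2\times C_2)$ with the external input from \cite{Vo89}. The only point requiring a modicum of care is checking that the hypothesis of the \cite{Vo89} result genuinely applies to $G=C_2$ — but a cyclic group of order $2$ has exponent $2<\infty$, so the hypothesis "finite exponent" is trivially met, and nothing further needs to be verified.
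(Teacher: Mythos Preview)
Your proof is correct and is essentially identical to the paper's own argument: the paper derives the lemma immediately from the variety equality $\var A{\kern-1pt}C_2=\var(A_2\times C_2)$ together with the result from~\cite{Vo89} that $A_2\times G$ is nonfinitely based for every group $G$ of finite exponent, applied with $G=C_2$. You have simply made explicit the two-line deduction that the paper packages into the sentence ``Hence we obtain the first property of the semigroup $A{\kern-1pt}C_2$ we need.''
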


\begin{note}
The short note~\cite{Ma83} contains an announcement (with a proof
sketch) of the following fact: the Rees matrix semigroup over the
group $C_2$ with the sandwich-matrix $\left(\begin{smallmatrix}1 & 1\\
0 & 1\end{smallmatrix}\right)$ is non\-finitely based. It is not
hard to show that this 9-element semigroup generates the same
variety as the semigroups $A{\kern-1pt}C_2$ and $A_2\times C_2$.
Therefore Lemma~\ref{nfb} can also be deduced from the result
of~\cite{Ma83}.
\end{note}

Now we describe the identities of the semigroup $A{\kern-1pt}C_2$.
For a word $w$, we denote by $\alf(w)$ the set of variables that
occur in $w$ and by $|w_x|$ the number of occurrences of the
variable $x$ in $w$. Given a word $w$, we assign to it a directed
graph $G(w)$ whose vertex set is $\alf(w)$ and whose edges
correspond to factors of length~2 in $w$ as follows: $G(w)$ has a
directed edge from $x$ to $y$ ($x,y\in\alf(w)$) if and only if
some occurrence of $x$ in $w$ immediately precedes some occurrence
of $y$.
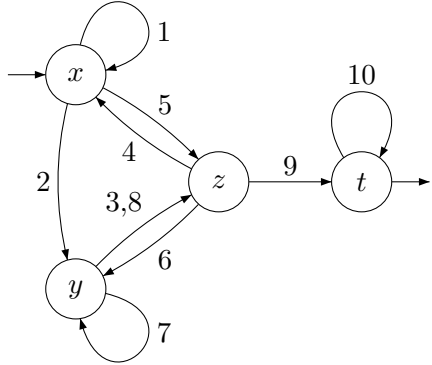
\begin{figure}[ht]
\begin{center}
\unitlength .95mm
\begin{picture}(40,50)(0,3)
\thinlines
\node[Nmarks=i](A)(0,40){$x$}
\node(B)(0,10){$y$}
\node(C)(20,25){$z$}
\node[Nmarks=f](D)(40,25){$t$}
\drawedge[curvedepth=-2.5,ELside=r](A,B){2}
\drawedge(C,D){9}
\gasset{curvedepth=1.5}
\drawloop[ELpos=65,loopangle=45](A){1}
\drawedge(B,C){3,8}
\drawedge(C,A){4}
\drawedge(A,C){5}
\drawedge(C,B){6}
\drawloop[ELpos=35,loopangle=-45](B){7}
\drawloop(D){10}
\end{picture}
\end{center}
\caption{The graph of the word $w=x^2yzxzy^2zt^2$ and the
corresponding walk}\label{example1}
\end{figure}
We will distinguish two (not necessarily different) vertices in
$G(w)$: the \emph{initial vertex}, that is the first letter of
$w$, and the \emph{final vertex}, that is the last letter of $w$.
Then the word $w$ defines a walk through the graph $G(w)$ that
starts at the initial vertex, ends at the final vertex and
traverses each edge of $G(w)$ (some of the edges can be traversed
more than once).

Figure~\ref{example1} shows the graph $G(w)$ for the word $w=
x^2yzxzy^2zt^2$. The ingoing and the outgoing marks show
respectively the initial and the final vertices of the graph. In
Fig.\,\ref{example1} each edge of the graph is labelled by the
number[s] corresponding to the occurrence[s] of the edge in the
walk induced by the word~$w$. We stress that, in contrast to the
vertex names and the ingoing/outgoing marks, these labels are
\textbf{not} considered as a part of the data making the graph
$G(w)$. Therefore the graph does not determine the word $w$: for
instance, as the reader can easily check, the word
$xy^3zyzx^2zyzt^3$ has exactly the same graph (but corresponds to
a different walk through it, see Fig.\,\ref{example2}).
\begin{figure}[ht]
\begin{center}
\unitlength .95mm
\begin{picture}(40,50)(0,3)
\thinlines
\node[Nmarks=i](A)(0,40){$x$}
\node(B)(0,10){$y$}
\node(C)(20,25){$z$}
\node[Nmarks=f](D)(40,25){$t$}
\drawedge[curvedepth=-2.5,ELside=r](A,B){1}
\drawedge(C,D){12}
\gasset{curvedepth=1.5}
\drawloop[ELpos=65,loopangle=45](A){8}
\drawedge(B,C){4,6,11}
\drawedge(C,A){7}
\drawedge(A,C){9}
\drawedge(C,B){5,10}
\drawloop[ELpos=35,loopangle=-45](B){2,3}
\drawloop(D){13,14}
\end{picture}
\end{center}
\caption{Another walk through the graph of
Fig.\,\ref{example1}}\label{example2}
\end{figure}
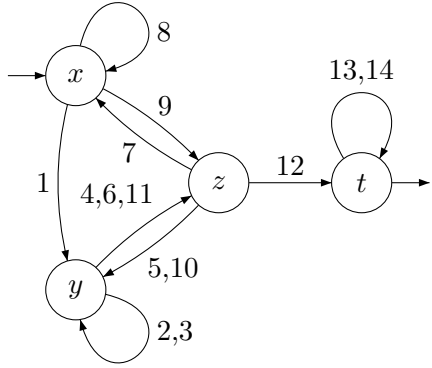

Observe that in terms of the graph $G(w)$, the number $|w_x|$
represents the number of times that the walk induced by the
word~$w$ visits the vertex $x$.

\begin{prop}
\label{identities of AC2} An identity $u=v$ holds true in the
semigroup $A{\kern-1pt}C_2$ if and only if the graphs $G(u)$ and
$G(v)$ coincide and, for each variable~$x$, the numbers $|u_x|$
and $|v_x|$ have the same parity.
\end{prop}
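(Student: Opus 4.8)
The plan is to prove both implications by reasoning about what substitutions into $A{\kern-1pt}C_2$ (equivalently, into $A_2\times C_2$) can detect. Since $A{\kern-1pt}C_2$ and $A_2\times C_2$ generate the same variety, it suffices to work with $A_2\times C_2$, and an identity $u=v$ holds in a direct product iff it holds in each factor. Thus the proposition splits into two tasks: (i) $A_2$ satisfies $u=v$ iff $G(u)=G(v)$; and (ii) $C_2$ satisfies $u=v$ iff for every variable $x$ the counts $|u_x|$ and $|v_x|$ agree in parity. Task (ii) is immediate: $C_2$ is the two-element group, and a group of exponent $2$ is commutative, so $C_2$ satisfies $u=v$ precisely when $u$ and $v$ have the same commutative image modulo the relation $x^2=1$, which is exactly the parity condition on each variable's multiplicity.

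The substance is task (i). For the ``only if'' direction I would show that if $G(u)\ne G(v)$, one can falsify $u=v$ in $A_2$. Recall $A_2=\{a,b,ab,ba,0\}$ with $a,ab,ba$ the non-zero idempotent-or-near-idempotent elements and $b^2=0$. The key observation is that the (nonzero) product of a sequence of elements of $A_2$ encodes whether consecutive ``incompatible'' pairs occur. Concretely, I would use substitutions sending each variable to one of $a$ or $b$ (or to a fixed element and another to $b$), exploiting that $b\cdot b=0$ while $a\cdot a = a$, $a\cdot b$, $b\cdot a$ are all nonzero: this lets a single substitution test whether a specific ordered pair $(x,y)$ occurs as a length-two factor, since placing $b$'s in the right coordinate produces $0$ exactly when the forbidden adjacency is absent. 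Similarly, the matrix representation $A_2\subseteq M_2(\mathbb{F})$ makes products transparent: $\left(\begin{smallmatrix}1&0\\0&0\end{smallmatrix}\right)$, $\left(\begin{smallmatrix}0&1\\0&0\end{smallmatrix}\right)$, $\left(\begin{smallmatrix}1&0\\1&0\end{smallmatrix}\right)$, $\left(\begin{smallmatrix}0&1\\0&1\end{smallmatrix}\right)$ are rank-one matrices, and a product of rank-one matrices $v_1w_1^T v_2 w_2^T\cdots$ is nonzero iff none of the inner products $w_i^T v_{i+1}$ vanishes; choosing the substitution so that $w_i^T v_{i+1}=0$ exactly for one designated adjacency gives the separating evaluation. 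Distinguishing the initial and final vertices is handled the same way by looking at the leftmost/rightmost factor. So a difference in $G(u)$ and $G(v)$ — whether in vertex set, edge set, initial vertex, or final vertex — yields a substitution on which $u$ and $v$ take different values (one zero, the other not).

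For the ``if'' direction of task (i), I would show that $G(u)=G(v)$ forces $u=v$ in $A_2$. Here the clean argument is structural: the value in $A_2$ of a word under a substitution $\varphi$ depends only on the data recorded by $G(u)$. Since every element of $A_2$ is a product of the generators $a,b$ and $A_2$ is generated by idempotents, it suffices (by substitution-composition) to check evaluations where each variable goes to a single generator, and then the value of the product is determined by: the first letter's value, the last letter's value, and which adjacent pairs of generator-values occur — all of which are read off from $G(u)$ together with the initial and final vertices. Because $G(u)=G(v)$ and $u,v$ have the same initial and final vertices (these are part of the graph data as defined), the two words evaluate identically. Combining (i) and (ii): $u=v$ holds in $A_2\times C_2$ iff $G(u)=G(v)$ and all variable multiplicities match in parity, which is the claim.

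I expect the main obstacle to be the bookkeeping in the ``only if'' part of (i): one must verify that for each of the several ways $G(u)$ and $G(v)$ can differ (vertices, edges, initial vertex, final vertex), there is genuinely a single substitution into $A_2$ witnessing the failure, and one must be careful that the substitution isolates the intended feature without accidentally zeroing out both sides. The rank-one/inner-product description of multiplication in $A_2$, or equivalently the ``$b^2=0$ and everything else nonzero'' behavior on the generators, is the tool that keeps this manageable; I would organize it as: (a) if $\alf(u)\ne\alf(v)$, collapse an extra variable to $0$; (b) if the initial or final vertices differ, use a substitution making one word start/end with a right/left zero divisor; (c) if an edge is present in $G(u)$ but not $G(v)$, pick the substitution that is a generator on the two endpoints of that edge (and on the rest chosen so as not to create spurious zeros) so that the walk of $v$ never hits the zero-producing adjacency while the walk of $u$ does.
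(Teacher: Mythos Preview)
Your reduction to $A_2\times C_2$ and the split into the two factors is exactly the paper's argument, and for the $C_2$ factor both you and the paper give the same one-line justification. The only difference is that for the $A_2$ factor the paper simply cites Trahtman's characterization~\cite{Tr81} (an identity $u=v$ holds in $A_2$ iff $G(u)=G(v)$), whereas you sketch a self-contained proof of that fact via the Rees/rank-one description of $A_2$. Your sketch is sound: the observation that a nonzero product of rank-one elements in $A_2$ is determined by the first factor's ``row index'', the last factor's ``column index'', and the set of consecutive pairs is precisely what makes the ``if'' direction work, and separating substitutions for the ``only if'' direction can indeed be found along the lines you outline. One small caution on the bookkeeping you flagged: substitutions using only the generators $a,b$ do not cleanly isolate a single non-loop edge (e.g.\ sending $x\mapsto b$ also forces a zero at every $xx$ adjacency), so for an edge $x\to y$ with $x\ne y$ one should take $\varphi(x)=ab$, $\varphi(y)=ba$, and all other variables to $a$; then $xy$ is the unique adjacency producing~$0$. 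Your approach buys self-containment; the paper's buys brevity.
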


\begin{proof}
We have mentioned above that the semigroups $A{\kern-1pt}C_2$ and
$A_2\times C_2$ satisfy the same identities. Clearly, an identity
holds in the semigroup $A_2\times C_2$ if and only if it holds in
each of the semigroups $A_2$ and $C_2$. It is known that an
identity $u=v$ holds true in the semigroup $A_2$ if and only if
the graphs $G(u)$ and $G(v)$ coincide, see \cite{Tr81}\footnote{In
the literature (see, for instance, \cite{Lee04} or~\cite{Tr94})
one sometimes refers to~\cite{Mash78} as the source for this
result even though the paper~\cite{Mash78} does not deal with the
semigroup $A_2$ at all. Apparently, this mistake originates from
an erroneous reference in the survey paper~\cite{ShVo85}.}.
Further, it is known (and easy to verify) that an identity $u=v$
holds true in the group $C_2$ if and only if the numbers $|u_x|$
and $|v_x|$ have the same parity for each variable~$x$.
\end{proof}

Proposition~\ref{identities of AC2} immediately implies
\begin{cor}
\label{basis} The identities
\begin{gather}
\label{eq:1} x^2 = x^4,\\
\label{eq:2} xyx = (xy)^3x,\\
\label{eq:3} xyxzx = xzxyx,\\
\label{eq:4} (x_1^2x_2^2\cdots x_n^2)^2 = (x_1^2x_2^2\cdots x_n^2)^3,\quad n=2,3,\dotsc,
\end{gather}
hold true in the semigroup $A{\kern-1pt}C_2$.
\end{cor}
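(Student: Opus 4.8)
The plan is to invoke Proposition~\ref{identities of AC2} directly: for each of the identities~\eqref{eq:1}--\eqref{eq:4} it suffices to check that the two sides have the same associated graph (with the same initial and final vertices) and that, for every variable, its numbers of occurrences on the two sides have the same parity. Both checks are short computations.

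The parity condition is immediate in all four cases. In~\eqref{eq:1} and~\eqref{eq:4} every variable occurs an even number of times on each side. In~\eqref{eq:3} the two sides contain each variable the same number of times, so there is nothing to verify. In~\eqref{eq:2} the variable $x$ occurs twice on the left and four times on the right, while $y$ occurs once on the left and three times on the right; in both cases the parities agree.

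It remains to compare the graphs. For~\eqref{eq:1} both $G(x^2)$ and $G(x^4)$ consist of the single vertex $x$ with a loop, and $x$ is simultaneously the initial and the final vertex. For~\eqref{eq:2} both $G(xyx)$ and $G((xy)^3x)=G(xyxyxyx)$ have vertex set $\{x,y\}$, exactly the two edges $x\to y$ and $y\to x$, and initial vertex $x$, final vertex $x$. For~\eqref{eq:3} both $G(xyxzx)$ and $G(xzxyx)$ have vertex set $\{x,y,z\}$, exactly the four edges $x\to y$, $y\to x$, $x\to z$, $z\to x$, and again $x$ as both the initial and the final vertex. Finally, for~\eqref{eq:4} put $w=x_1^2x_2^2\cdots x_n^2$. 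Reading off the length-two factors of $w^k$ for any $k\ge 2$ shows that $G(w^k)$ has vertex set $\{x_1,\dots,x_n\}$, a loop at each $x_i$, an edge $x_i\to x_{i+1}$ for $1\le i\le n-1$, and one extra ``wrap-around'' edge $x_n\to x_1$ arising from the junctions between consecutive copies of $w$; the initial vertex is $x_1$ and the final vertex is $x_n$. Hence $G(w^2)=G(w^3)$, which gives~\eqref{eq:4}.

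I do not expect a genuine obstacle here; the verification is entirely routine. The only point worth a moment's care is the last one: it is precisely the wrap-around edge $x_n\to x_1$ that makes $G(w)$ differ from $G(w^2)$, so one cannot strengthen~\eqref{eq:4} to $w=w^2$; but once a second copy of $w$ is present, no new edges appear and the graph stabilizes, so $w^2$ and $w^3$ are indeed interchangeable in $A{\kern-1pt}C_2$.
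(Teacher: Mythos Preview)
Your proposal is correct and follows precisely the approach of the paper: both invoke Proposition~\ref{identities of AC2} and then verify, for each identity, that the two sides have the same graph and that every variable occurs with the same parity on each side. The paper's proof is terser (it simply declares the verifications ``easy to see'' and points to Figure~\ref{4 graphs}), while you spell out the edge sets and parity counts explicitly, but the substance is identical.
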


\begin{proof}
It is easy to see that for each of the identities
\eqref{eq:1}--\eqref{eq:4}, the graph of its left hand side
coincides with the graph of its right hand side (the graphs are
shown in Fig.\,\ref{4 graphs}) and each variable
\begin{figure}[ht]
\begin{center}
\unitlength .95mm
\begin{picture}(120,50)
\thinlines
\node[Nmarks=if](A)(0,40){$x$}
\drawloop(A){}
\put(-2,30){\eqref{eq:1}}
\node[Nmarks=if,iangle=90,fangle=-90](B)(30,40){$x$}
\node(C)(50,40){$y$}
\drawedge[curvedepth=2](B,C){}
\drawedge[curvedepth=2](C,B){}
\put(38,30){\eqref{eq:2}}
\node[Nmarks=if,iangle=90,fangle=-90](D)(100,40){$x$}
\node(E)(80,40){$y$}
\node(F)(120,40){$z$}
\drawedge[curvedepth=2](D,E){}
\drawedge[curvedepth=2](E,D){}
\drawedge[curvedepth=2](D,F){}
\drawedge[curvedepth=2](F,D){}
\put(92,30){\eqref{eq:3}}
\node[Nmarks=i](A1)(20,10){$x_1$}
\node(A2)(40,10){$x_2$}
\node(A3)(80,10){\small$x_{n{-}1}$}
\node[Nmarks=f](A4)(100,10){$x_n$}
\drawloop(A1){}
\drawloop(A2){}
\drawloop(A3){}
\drawloop(A4){}
\drawedge(A1,A2){}
\drawedge(A3,A4){}
\drawedge[curvedepth=8](A4,A1){}
\put(58,-4){\eqref{eq:4}}
\node[Nframe=n](G)(60,10){$\cdots$}
\drawedge(A2,G){}
\drawedge(G,A3){}
\end{picture}
\end{center}
\caption{The graphs of the identities
\eqref{eq:1}--\eqref{eq:4}}\label{4 graphs}
\end{figure}
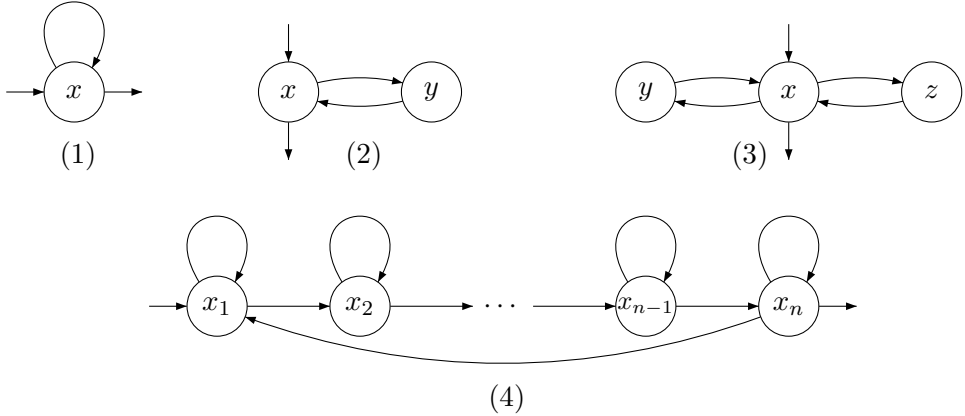
occurs on the left and on the right with the same parity.
\end{proof}

We aim to clarify the structural meaning of the identities
\eqref{eq:1}--\eqref{eq:4}. We start with the series~\eqref{eq:4}.
Recall that a semigroup is said to be \emph{combinatorial} if all
of its subgroups are singletons.

\begin{prop}
\label{aperiodic core} If a semigroup $S$ satisfies the identities
\eqref{eq:4}, then the subsemigroup generated by all idempotents
of $S$ is combinatorial. If $S$ satisfies the identity
\eqref{eq:1}, then the converse is true as well.
\end{prop}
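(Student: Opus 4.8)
The plan is to route both directions through a one‑line group‑theoretic fact: if an element $w$ lies in a subgroup with identity $e$ and satisfies $w^2=w^3$, then multiplying this equality by the inverse of $w^2$ in that subgroup gives $e=w$, so $w$ is idempotent. Consequently a semigroup $T$ is combinatorial precisely when $w^2=w^3$ holds for every $w\in T$ that belongs to some subgroup of $T$. Since every element of the subsemigroup $T=\langle E(S)\rangle$ generated by the idempotents of $S$ is, by definition, a product of idempotents of $S$, the whole proposition reduces to understanding when products of idempotents, and the squares $s^2$, satisfy $w^2=w^3$.

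For the first implication I would argue as follows. Assume $S$ satisfies the identities~\eqref{eq:4}, let $H$ be a subgroup of $\langle E(S)\rangle$, and pick an arbitrary $w\in H$. Write $w=e_1\cdots e_k$ with each $e_i$ an idempotent of $S$. If $k=1$ then $w$ is already idempotent, so $w^2=w^3$ trivially; if $k\ge 2$, then substituting $x_i\mapsto e_i$ into the identity $(x_1^2\cdots x_k^2)^2=(x_1^2\cdots x_k^2)^3$ and using $e_i^2=e_i$ yields exactly $w^2=w^3$. In either case the group fact above forces $w$ to be the identity of $H$; since $w$ was arbitrary, $H$ is trivial, and $\langle E(S)\rangle$ is combinatorial.

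For the converse I would additionally assume~\eqref{eq:1} and that $\langle E(S)\rangle$ is combinatorial, and verify each identity of the series~\eqref{eq:4}. Fix $n\ge 2$, pick $s_1,\dots,s_n\in S$ and put $w=s_1^2\cdots s_n^2$. From $x^2=x^4$ we get $s_i^2=(s_i^2)^2$, so each $s_i^2$ is an idempotent of $S$ and hence $w\in\langle E(S)\rangle$; applying $x^2=x^4$ to $w$ itself gives $w^2=w^4$, so in particular $w^2$ is idempotent. A routine computation (using $w^4=w^2$, e.g.\ $w^2w^3=w^5=w^3=w^3w^2$ and $w^3w^3=w^6=w^2$) shows that $\{w^2,w^3\}$ is closed under multiplication, with $w^2$ acting as identity, and is therefore a subgroup of $\langle E(S)\rangle$. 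Combinatoriality forces this subgroup to be a singleton, i.e.\ $w^2=w^3$, which is precisely the instance of~\eqref{eq:4} corresponding to the chosen $n$ and $s_1,\dots,s_n$.

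I do not expect a serious obstacle; the one point deserving care is pinpointing the exact role of~\eqref{eq:1} in the converse. It is used twice — to make each square $s_i^2$ idempotent, so that $w$ genuinely lies in $\langle E(S)\rangle$, and to make $\{w^2,w^3\}$ a group, so that combinatoriality can be invoked. Both uses are essential: a combinatorial idempotent‑generated semigroup need not satisfy~\eqref{eq:4} in the absence of the constraint that squares be idempotent, which is exactly why the converse direction of the proposition carries the hypothesis~\eqref{eq:1}.
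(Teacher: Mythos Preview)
Your proof is correct and follows essentially the same route as the paper's. Both directions hinge on the same two observations: idempotents are squares, so every element of $\langle E(S)\rangle$ is a value of some $x_1^2\cdots x_n^2$, and the identities~\eqref{eq:4} then force $w^2=w^3$ on such elements, which kills nontrivial subgroups. Your converse is slightly more explicit than the paper's---where the paper simply asserts that in a combinatorial semigroup \eqref{eq:1} implies \eqref{eq:5} and then substitutes, you unpack this by exhibiting the two-element group $\{w^2,w^3\}$ directly---but this is the same argument with one layer of abstraction removed, not a different approach.
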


\begin{proof}
Since every idempotent can be represented as a square, products of
the form $x_1^2 \cdots x_n^2$, $n=2,3,\dotsc$, represent all
elements of the subsemigroup $T$ generated by all idempotents of
$S$. If $S$ satisfies~\eqref{eq:4}, then $T$ satisfies the
identity
\begin{equation}
\label{eq:5} x^2 = x^3
\end{equation}
that cannot hold in a non-singleton group. Hence, the subsemigroup
$T$ is combinatorial.

Conversely, let $S$ satisfies the identity \eqref{eq:1}. Then the
subsemigroup $T$ also satisfies this identity but in a
combinatorial semigroup \eqref{eq:1} implies~\eqref{eq:5}. In the
presence of the identity \eqref{eq:1}, the square of each element
of $S$ is an idempotent whence the values of all products of the
form $x_1^2 \cdots x_n^2$, $n=2,3,\dotsc$, lie in $T$.
Substituting these products for the variable in~\eqref{eq:5}, we
see that $S$ satisfies all identities from the
series~\eqref{eq:4}.
\end{proof}

The variety generated by all completely 0-simple semigroups whose
subgroups have exponent $n$ is commonly denoted by
$\mathcal{R{\kern-1pt}S}_n$. Clearly, the semigroup
$A{\kern-1pt}C_2$ belongs to the variety
$\mathcal{R{\kern-1pt}S}_2$. The next results reveals the role of
the identities \eqref{eq:1}--\eqref{eq:3}:
\begin{prop}
\label{RS2} The identities \eqref{eq:1}--\eqref{eq:3} form an
identity basis of the variety $\mathcal{R{\kern-1pt}S}_2$.
\end{prop}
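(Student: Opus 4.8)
The plan is to prove the two inclusions separately. One direction, that every identity satisfied by $\mathcal{R{\kern-1pt}S}_2$ follows from \eqref{eq:1}--\eqref{eq:3}, is the substantive part; the reverse direction, that \eqref{eq:1}--\eqref{eq:3} actually hold in $\mathcal{R{\kern-1pt}S}_2$, should be quick. For the easy direction one checks that each of the three identities holds in an arbitrary completely $0$-simple semigroup whose subgroups have exponent $2$; since such semigroups generate $\mathcal{R{\kern-1pt}S}_2$, this suffices. Using the Rees matrix representation $M^0(G;I,\Lambda;P)$ with $G$ of exponent $2$, each identity is verified by a direct computation on a product $(i,g,\lambda)$: identity \eqref{eq:1} reduces to $g^2=g^4$, which holds since $g^2=1$; identity \eqref{eq:2} becomes, after tracking the sandwich entries, an equation in $G$ that again collapses because every element of $G$ squares to the identity; and \eqref{eq:3} is a commutation that follows from the fact that $A_2$ (equivalently, the combinatorial part) already satisfies it together with the exponent-$2$ condition on the group part — alternatively one notes that $A{\kern-1pt}C_2$ itself, being built from $A_2$ and $C_2$, satisfies all three by Corollary~\ref{basis}, and every group of exponent $2$ is abelian so the completely $0$-simple pieces behave like those over $C_2$ as far as these identities go.

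For the hard direction I would argue by a normal-form / rewriting analysis. Let $\mathcal{V}$ be the variety defined by \eqref{eq:1}--\eqref{eq:3}; we must show $\mathcal{V}\subseteq\mathcal{R{\kern-1pt}S}_2$, i.e. every identity of $\mathcal{R{\kern-1pt}S}_2$ is a consequence of \eqref{eq:1}--\eqref{eq:3}. The strategy is to show that every word $w$ can be rewritten, using \eqref{eq:1}--\eqref{eq:3}, into a canonical form determined by exactly the data that a completely $0$-simple semigroup over an exponent-$2$ group can see. From the classical description of the identities of such semigroups (and of $A_2$, recalled in the proof of Proposition~\ref{identities of AC2}, to which this proposition is meant to reduce) the relevant invariants of a word $w$ should be: the set $\alf(w)$, the first and last letters, the ``$2$-factor'' structure recorded by the graph $G(w)$, and the parity vector $(|w_x|\bmod 2)_{x}$. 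The key lemma is therefore: if two words $w_1,w_2$ have the same $\alf$, the same initial and final letters, the same graph, and the same parity vector, then $w_1=w_2$ is derivable from \eqref{eq:1}--\eqref{eq:3}. Granting this, any identity $u=v$ valid in $\mathcal{R{\kern-1pt}S}_2$ must equate words with matching invariants (the invariants are preserved by $\mathcal{R{\kern-1pt}S}_2$, in particular by $A_2\times C_2\in\mathcal{R{\kern-1pt}S}_2$), hence is a consequence of our three identities, giving $\mathcal{V}\supseteq\mathcal{R{\kern-1pt}S}_2$, and combined with the easy direction, equality.

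To prove the key lemma I would set up a reduction procedure. First use \eqref{eq:1} in the form $x^2=x^4$ together with \eqref{eq:2} to control repeated blocks: \eqref{eq:2}, read as $xyx=(xy)^3x$, lets one insert or delete a square of a $2$-letter block $xy$ whenever $x$ already reoccurs, and more generally (via substitution) lets one normalize the multiplicity of any ``return visit'' to a vertex of the graph down to its parity. Then \eqref{eq:3}, $xyxzx=xzxyx$, provides the commutation needed to reorder detours hanging off a common vertex, so that the whole word can be collapsed to a standard traversal of the graph $G(w)$ with prescribed first and last vertices and with each vertex visited the correct number of times modulo $2$. The main obstacle is exactly this combinatorial bookkeeping: showing that \eqref{eq:1}--\eqref{eq:3} are rich enough to permute and merge the various excursions in an arbitrary word — in effect re-deriving Trahtman's basis for $A_2$ and splicing in the $C_2$ parity identity — without ever changing the graph or the parities. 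I expect the cleanest route is to first reduce to the case of a single "flower" around one vertex (a word of the form $x\,u_1 x\,u_2 x\cdots x$ with the $u_i$ shorter), handle that case by induction using \eqref{eq:2} and \eqref{eq:3}, and then assemble the global normal form by induction on $|\alf(w)|$ or on word length, using the graph structure to organize the induction.
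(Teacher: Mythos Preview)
First, a note on the comparison itself: the paper explicitly declines to prove Proposition~\ref{RS2}, stating immediately after it that ``We do not prove Proposition~\ref{RS2} here because it is not used in the present paper.'' So there is no proof in the paper against which to compare your attempt; I can only assess your argument on its own merits.

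Your sketch contains a fatal error in the ``key lemma''. You claim that any two words with the same graph and the same parity vector can be transformed into one another using only \eqref{eq:1}--\eqref{eq:3}. But by Proposition~\ref{identities of AC2}, having the same graph and the same parity vector is exactly the condition for an identity to hold in $A{\kern-1pt}C_2$. Your key lemma therefore says that \eqref{eq:1}--\eqref{eq:3} already axiomatize $\var A{\kern-1pt}C_2$, making $A{\kern-1pt}C_2$ finitely based --- in direct contradiction to Lemma~\ref{nfb} and to Theorem~\ref{basis1}, where the infinite family \eqref{eq:4} is genuinely needed. A concrete witness: the $n=2$ instance of \eqref{eq:4}, namely $(x^2y^2)^2=(x^2y^2)^3$, has identical graphs and parities on both sides, yet it fails in the Rees matrix semigroup $M^0(C_2;\{1,2\},\{1,2\};P)$ with $P=\left(\begin{smallmatrix}1&1\\1&c\end{smallmatrix}\right)$ (take $x=(1,1,1)$, $y=(2,1,2)$ and compute). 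This semigroup lies in $\mathcal{R{\kern-1pt}S}_2$ and hence satisfies \eqref{eq:1}--\eqref{eq:3}, so the identity cannot be derived from them.

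The conceptual slip is the tacit assumption that $A_2\times C_2$ generates all of $\mathcal{R{\kern-1pt}S}_2$. It does not: $\mathcal{R{\kern-1pt}S}_2$ contains completely 0-simple semigroups over $C_2$ whose sandwich matrices have nontrivial group entries, and those distinguish words that $A_2\times C_2$ cannot --- precisely because their idempotent-generated subsemigroups are not combinatorial (compare Proposition~\ref{aperiodic core} and Lemma~\ref{houghton}). A correct normal-form argument would need to isolate the strictly coarser invariant that $\mathcal{R{\kern-1pt}S}_2$ actually detects; ``graph plus vertex-parity'' is already the invariant of the smaller variety $\var A{\kern-1pt}C_2$, not of $\mathcal{R{\kern-1pt}S}_2$.
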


We do not prove Proposition~\ref{RS2} here because it is not used
in the present paper. We notice that various identity bases for
$\mathcal{R{\kern-1pt}S}_n$ have been provided
in~\cite{Mash91,Mash97,Ha97}\footnote{However, the identity basis
for $\mathcal{R{\kern-1pt}S}_2$ specified in Proposition~\ref{RS2}
is not a specialization of the bases for
$\mathcal{R{\kern-1pt}S}_n$  provided
in~\cite{Mash91,Mash97,Ha97}.}. Unfortunately, the proofs of the
corresponding results in these papers cannot be considered as
complete because they all essentially use a lemma
from~\cite{Mash91} whose proof in~\cite{Mash91} is wrong. We shall
discuss these nuances in the course of the proof of the next
theorem that plays a key role in the present paper.

\begin{thm}
\label{basis1} The identities \eqref{eq:1}--\eqref{eq:4} form an
identity basis for $A{\kern-1pt}C_2$.
\end{thm}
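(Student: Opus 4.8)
The plan is to show that every identity $u=v$ holding in $A{\kern-1pt}C_2$ is a consequence of \eqref{eq:1}--\eqref{eq:4}. By Proposition~\ref{identities of AC2}, it suffices to fix two words $u,v$ with $G(u)=G(v)$ and with $|u_x|\equiv|v_x|\pmod 2$ for every variable $x$, and to derive $u=v$ from the listed identities. The strategy is to define a \emph{normal form} for words with respect to the deducibility relation generated by \eqref{eq:1}--\eqref{eq:4}, and to prove that two words with the same graph and the same occurrence-parities reduce to the same normal form. First I would record the immediate consequences of the basis: from \eqref{eq:1} one gets $x^2=x^{2k}$ for all $k\ge1$, so every block of consecutive equal letters can be squeezed down to exponent $1$ or $2$ according to its parity; from \eqref{eq:2} one can, in the presence of a return edge, cut down repeated $xyx$-patterns; and \eqref{eq:3} lets one permute ``pendant'' excursions $x\cdots zx$ past one another. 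The combinatorial core of the first reduction phase is to show that, modulo \eqref{eq:1}--\eqref{eq:3}, any word $w$ is equal to one in which, between the initial and final vertex, every edge of $G(w)$ is traversed at most a bounded number of times — essentially reducing the walk to a ``canonical'' walk determined by $G(w)$ alone, carrying along a parity tag on each vertex.

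The second phase is to absorb the parity information using \eqref{eq:4}. Once a word has been reduced so that its underlying walk is the canonical walk of $G(w)$, the only remaining freedom is how many extra times each vertex is visited (each loop or back-and-forth costing an even or odd number of further visits). Here I would argue that the surplus visits at the vertices can all be gathered — using \eqref{eq:3} to commute the excursions and \eqref{eq:1}--\eqref{eq:2} to contract them — into a single factor of the shape $x_{i_1}^2x_{i_2}^2\cdots x_{i_m}^2$ (the product of squares of exactly those variables whose surplus is odd), possibly with an extra even power in front. An identity of type \eqref{eq:4} then lets this square-product be expanded or contracted freely, collapsing all ``even surplus'' to nothing and normalizing the ``odd surplus'' part. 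Thus the normal form of $w$ is determined by the pair $\bigl(G(w),\ (|w_x|\bmod 2)_{x\in\alf(w)}\bigr)$ together with the initial and final vertices, which is exactly the invariant appearing in Proposition~\ref{identities of AC2}. Consequently $u$ and $v$ have identical normal forms and $u=v$ follows from \eqref{eq:1}--\eqref{eq:4}; the reverse direction (soundness) is Corollary~\ref{basis}.

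The main obstacle I anticipate is the first phase: showing that \eqref{eq:1}--\eqref{eq:3} alone suffice to bring the walk of an arbitrary word into a form determined by its graph. This is precisely the place where the literature is delicate — the excerpt warns that the analogous lemma from~\cite{Mash91} for $\mathcal{R{\kern-1pt}S}_n$ has a flawed proof — so I would treat this step with special care. Concretely, the danger is that naive applications of \eqref{eq:2} and \eqref{eq:3} may fail to terminate or may not reach a unique form; I would therefore set up an explicit well-founded measure on words (e.g.\ a lexicographic combination of total length and the multiset of edge-multiplicities in the induced walk) and verify that a suitable confluent rewriting system built from \eqref{eq:1}--\eqref{eq:3} strictly decreases it, so that the normal form exists and is unique. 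A secondary technical point is bookkeeping of the initial and final vertices throughout the reductions, since \eqref{eq:2} and \eqref{eq:3} are only applicable to internal factors and one must be sure the prefix and suffix of the word are never disturbed in a way that changes the first or last letter. Once these two points are handled, the passage from the reduced walk to the normal form via \eqref{eq:4} is routine.
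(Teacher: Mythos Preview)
Your approach is entirely different from the paper's. You propose a syntactic normal-form argument; the paper argues semantically. It sets $\mathcal{V}$ equal to the variety defined by \eqref{eq:1}--\eqref{eq:4}, supposes for contradiction that some identity of $A{\kern-1pt}C_2$ fails in $\mathcal{V}$, and chooses such an identity $u=v$ with $|\alf(u)|$ minimal. Lemmas~\ref{separability} and~\ref{connect} then force $u$ and $v$ to be connected; Lemma~\ref{regular elements} (whose proof is the \emph{only} place where words are actually manipulated, via \eqref{eq:2} and \eqref{eq:3}) shows that any values $p,q$ of $u,v$ in a semigroup $S\in\mathcal{V}$ are regular; Lemma~\ref{kublanovsky} produces a completely 0-simple quotient $K$ of $S$ still separating $p$ from $q$. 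Finally Proposition~\ref{aperiodic core} together with Lemma~\ref{houghton} forces $K$ to be a Rees matrix semigroup over an exponent-$2$ group with a $0/1$ sandwich matrix, hence a homomorphic image of $T\times G$ with $T\in\var A_2$ and $G\in\var C_2$, so $K\in\var A{\kern-1pt}C_2$ --- a contradiction. The infinite scheme \eqref{eq:4} is never used syntactically; it enters only through its structural translation in Proposition~\ref{aperiodic core}.

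Your proposal has a gap more serious than the one you yourself flag. By Proposition~\ref{RS2} the identities \eqref{eq:1}--\eqref{eq:3} alone axiomatize $\mathcal{R{\kern-1pt}S}_2$, and $\var A{\kern-1pt}C_2\subsetneq\mathcal{R{\kern-1pt}S}_2$ (indeed, were they equal, $A{\kern-1pt}C_2$ would be finitely based, contrary to Lemma~\ref{nfb}). Hence there exist words $u,v$ with identical graph and identical parities that are \emph{not} equivalent modulo \eqref{eq:1}--\eqref{eq:3}. So no Phase~1 reduction using only \eqref{eq:1}--\eqref{eq:3} can reach a form determined by $\bigl(G(w),(|w_x|\bmod 2)_x\bigr)$; whatever you obtain must retain strictly more information, and the entire burden falls on Phase~2. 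But your Phase~2 does not do what you claim: \eqref{eq:4} asserts only that a product of squares $p=x_1^2\cdots x_n^2$ satisfies $p^2=p^3$. It gives no way to delete $p$, to change which variables appear in it, to reorder its factors, or to ``collapse even surplus to nothing''. You would need a substantive combinatorial argument showing how \eqref{eq:4}, in concert with \eqref{eq:1}--\eqref{eq:3}, identifies two distinct $\mathcal{R{\kern-1pt}S}_2$-normal forms with the same graph and parity, and nothing in the proposal indicates what that argument would be. The paper's structural route is designed precisely to avoid this: passing to completely 0-simple quotients turns \eqref{eq:4} into the clean hypothesis of Lemma~\ref{houghton}, after which the conclusion is immediate.
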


The proof follows a scheme suggested in~\cite{LeeVolkov06}. We
need a few auxiliary statements.

A word $w$ of length at least~2 is said to be \emph{connected} if
its graph $G(w)$ is strongly connected\footnote{This concept
sometimes appears in the literature under different names. For
instance, in~\cite{Mash91} a word $w$ of length at least~2 is said
to be \emph{covered by cycles} each if each of its factors of
length 2 occurs in a factor of $w$that begins and ends with a
common letter. In the language of the graph $G(w)$, this means
that each directed edge $x\to y$ of $G(w)$ belongs to a directed
cycle (namely, to the walk induced by a factor of $w$ that starts
and ends with the same letter and contains $xy$). It is one of the
basic facts of the theory of directed graphs (cf.~\cite{Ore62},
Theorem~8.1.5) that such a graph is strongly connected if and only
if each of its directed edges belongs to a directed cycle. Thus,
words covered by cycles in the sense of~\cite{Mash91} are
precisely connected words in our sense. Yet another name for an
obviously equivalent concept has been introduced
in~\cite{Pollak-02}, where a word $w$ of length at least~2 is said
to be \emph{prime} if it cannot be decomposed as $w=w'w''$ with
$\alf(w')\cap\alf(w'')=\varnothing$.}. Let $\mathcal{V}$ be the
variety defined by the identities \eqref{eq:1}--\eqref{eq:4}.

\begin{lemma}
\label{regular elements} If $w$ is a connected word and $S$ is a
semigroup in $\mathcal{V}$, then every value of $w$ in $S$ is a
regular element in~$S$.
\end{lemma}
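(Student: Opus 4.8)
The plan is to show that any value of a connected word $w$ in a semigroup $S\in\mathcal{V}$ lies in a subgroup of $S$, hence is regular. The starting point is the observation (recorded just before the statement) that, because $G(w)$ is strongly connected, every edge $x\to y$ of $G(w)$ lies on a directed cycle; in word-theoretic terms, each length-$2$ factor of $w$ occurs inside some factor of $w$ that begins and ends with the same letter. I would first reduce to the case where $w$ itself begins and ends with the same letter: if $w$ starts with $x$ and ends with $z$, then since $G(w)$ is strongly connected there is a directed path from $z$ back to $x$, which we may realize by a word $p$ with $\alf(p)\subseteq\alf(w)$ and all edges of $G(p)$ already edges of $G(w)$; then $w$ and $wp w$ have the same graph and the same parity vector for every variable that occurs an even number of times in $p$ — in any case it suffices to prove the statement for words of the form ``starts and ends with the same letter'', and deduce the general case by noting that $w$ is a factor of such a word whose value, once shown to be regular (indeed group-bounded), forces the value of $w$ to be regular too. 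Actually the cleaner route is: it is enough to show every value of $w$ lies in a group $\mathcal{H}$-class, because then $w = w e$ for the identity $e$ of that class and $e$ is an idempotent with $we = w$, $ew$ also well behaved, giving regularity directly.

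Next, the heart of the argument: I would prove that if $w$ is connected then $S$ satisfies $w = w^{k+1}$ for a suitable $k$ — concretely, I expect $w^2 = w^3$ up to replacing $w$ by an equivalent word, or more robustly that for some $n\ge 1$ the element $g = $ (value of $w$) satisfies $g = g^{n+1}$, i.e.\ $g$ lies in a subgroup. Here is where identities \eqref{eq:1}--\eqref{eq:4} do the work. Using \eqref{eq:2}, $xyx = (xy)^3 x$, one can "fold" a connected word: pick a letter $a$ that both starts and ends $w$ (after the reduction above), so $w = a u$ with $u$ ending in $a$, and more usefully every length-$2$ factor sits inside an $a$-to-$a$ return word. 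Iterating \eqref{eq:2} and \eqref{eq:3} (which lets one commute the "excursions" $xyx$ and $xzx$ hanging off a common letter $x$), I would massage $w$ into a product of blocks each of which either is a single letter or has the form $v^2$ with $\alf(v)\subseteq\alf(w)$; the squared blocks are exactly where \eqref{eq:1} and the series \eqref{eq:4} apply, yielding $w = w^2 w$ or an idempotent power. The precise bookkeeping — showing that repeated application of \eqref{eq:2}/\eqref{eq:3} terminates in a word all of whose "non-square residue" can be absorbed — is the step I expect to be the main obstacle, since one must control the graph and the parities throughout to stay inside the deductive closure of \eqref{eq:1}--\eqref{eq:4}.

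Once $S$ satisfies an identity of the form $w = w^{m+1}$ with $m\ge 1$ (equivalently $w^m$ is idempotent and $w\cdot w^m = w$), regularity is immediate: writing $e = w^m$, we have $e = e^2$ and $we = e w = w$, so $w = w e w$ is von Neumann regular in $S$. I would then lift the reduction: for a general connected $w$ starting with $x$ and ending with $z$, choose the return word $p$ as above so that $wp$ begins and ends with $x$ and is still connected with $\alf(wp)=\alf(w)$; apply the previous paragraph to $wp$ to get an idempotent $f = (wp)^m$ with $(wp) f = f (wp) = wp$; then the value of $w$ divides the value of $wpw = w f w'$ for an appropriate completion, and a short $\mathcal{D}$-class argument (any element $g$ with $g\,\mathcal{L}\,gh$ and $gh$ regular is itself regular, using stability of finite — or here, of the relevant — semigroups, or more elementarily Green's lemma applied to the idempotent $f$) shows the value of $w$ is regular. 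I would close by remarking that this lemma is exactly the tool needed next: it identifies the "stable part" of any $S\in\mathcal{V}$ as built from regular $\mathcal{J}$-classes, setting up the structural description of $\var A{\kern-1pt}C_2$ in the subsequent results.
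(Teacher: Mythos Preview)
Your central strategy---showing that the value of a connected word $w$ lies in a \emph{subgroup} of $S$, equivalently that $\mathcal{V}$ satisfies an identity $w=w^{k+1}$ for some $k\ge 1$---is not merely hard to carry out; it is false. Take $w=xyx$, which is connected, and evaluate it in $A_2\in\mathcal{V}$ at $x=b$, $y=a$. The value is $bab=b$, and $b$ is certainly regular (since $bab=b$), but $b^2=0$, so $b\ne b^{k+1}$ for every $k\ge 1$. Thus no identity of the form $w=w^{k+1}$ can hold in $\mathcal{V}$ for this $w$, and the value does not lie in any subgroup. Regularity is strictly weaker than group membership, and that gap matters here.

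Your fallback plan---extend $w$ to a word $wp$ that starts and ends with the same letter, prove the value of $wp$ is a group element, then descend via a Green's-relations argument---inherits the same defect (the value of $wp$ need not be a group element either), and additionally leans on stability ``of finite --- or here, of the relevant --- semigroups''. But $S$ is an arbitrary semigroup in the variety $\mathcal{V}$, not assumed finite or periodic in any useful structural sense, so you cannot invoke stability. The paper's proof avoids all of this by working purely syntactically: it shows that from the identities \eqref{eq:1}--\eqref{eq:3} alone one can \emph{derive} an identity $w=ww'w$. When $w$ starts and ends with the same letter this is immediate from \eqref{eq:2}; otherwise an auxiliary combinatorial lemma (proved by induction on the distance between the last occurrence of the first letter and the first occurrence of the last letter) uses \eqref{eq:2} and \eqref{eq:3} to rewrite $w$ so that some occurrence of its first letter appears after some occurrence of its last letter, after which a short explicit chain of applications of \eqref{eq:2} and \eqref{eq:3} produces $w=ww'w$. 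Note in particular that the series \eqref{eq:4} is never used.
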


\begin{proof}
We recall that an element $s\in S$ is said to be \emph{regular in
$S$} if there exists an element $s^\prime \in S$ such that $ss's =
s$. Therefore in order to prove the lemma it suffices to construct
a word $w'$ such that the variety $\mathcal{V}$ satisfies the
identity $w = ww'w$. If the word $w$ begins and ends with the same
variable, then we can apply the identity~\eqref{eq:2} to it (or
the identity~\eqref{eq:1} in the case when $w$ is the square of a
variable) and we immediately get the necessary conclusion. We may
therefore assume that $w$ begins with a variable $x$ and ends with
a variable $y$ such that $x\ne y$.

Since the word $w$ is connected, each of the variables $x$ and $y$
occurs in $w$ more than once. We want to show that, applying the
identities \eqref{eq:2} and \eqref{eq:3}, one can transform $w$
into a word in which some occurrence of the variable $x$ appears
after some occurrence of the variable $y$. For this, it is
convenient to prove a slightly more general lemma.

\begin{lemma}
\label{insteadmash} Let $w$ be a connected word, $x,y\in\alf(w)$
and
\begin{equation}
\label{representation} w=w_1xw_2yw_3,\  \text{ причем } \
x\notin\alf(w_2yw_3) \text{ and } y\notin\alf(w_1xw_2).
\end{equation}
Applying the identities \eqref{eq:2} and \eqref{eq:3}, one can
transform $w$ into a word $w_1xw'_2yw_3$ such that
$x,y\in\alf(w'_2)$ and some occurrence of the variable $x$ in
$w'_2$ appears after some occurrence of the variable $y$ in $w'_2$
\end{lemma}

\begin{proof}
First of all, we observe that if some occurrences of the variables
$x$ and $y$ happen between two occurrences of some variable $z$,
then the desired transformation can be achieved by an application
of the identity \eqref{eq:2} to the factor bordered by these two
occurrences of $z$:
\begin{align*}
w&=\underbrace{w_{11}zw_{12}}_{w_1}xw_2y\underbrace{w_{31}zw_{32}}_{w_3}=w_{11}(zw_{12}xw_2yw_{31})^3zw_{32} &\text{(by \eqref{eq:2})}\\
 &=\underbrace{w_{11}zw_{12}}_{w_1}xw_2yw_{31}zw_{12}xw_2\underline{\underline{y}}
   w_{31}zw_{12}\underline{\underline{x}}w_2y\underbrace{w_{31}zw_{32}}_{w_3}.&
\end{align*}
(The ``permuted'' occurrences of the variables $x$ and $y$ are
underlined twice.)

Now we induct on the length of the word $w_2$ in the decomposition
\eqref{representation}, that is, on the distance between the right
most occurrence of $x$ and the left most occurrence of $y$. If
this distance is equal to 0, then the word $w$ has $xy$ as a
factor. Since $w$ is connected, this factor should appear between
two occurrences of some variable $z$, and then the argument from
the previous paragraph applies. This proves the induction basis.

Now suppose that in the decomposition \eqref{representation} the
word $w_2$ is not empty. In view of the first paragraph of the
proof, we can assume that $\alf(w_1x)\cap\alf(yw_3)=\varnothing$.
Since $w$ is connected, the word $w_2$ must have common variables
with each of the words $w_1$ and $w_3$. Consider two cases.

\smallskip

\noindent\textbf{\emph{Case 1}.} \emph{Some variable
$z\in\alf(w_1)\cap\alf(w_2)$ occurs in the word $w_2$ to the left
of some variable $t\in\alf(w_2)\cap\alf(w_3)$}.

In this case, the desired transformations are as follows:
\begin{align*}
w&=\underbrace{w_{11}zw_{12}}_{w_1}x\underbrace{w_{21}tw_{22}zw_{23}}_{w_2}y\underbrace{w_{31}tw_{32}}_{w_3}&\\
 &=w_{11}(zw_{12}xw_{21}tw_{22}z)^3w_{23}yw_{31}tw_{32}&\text{(by \eqref{eq:2})}\\
 &=w_{11}zw_{12}xw_{21}tw_{22}zw_{12}xw_{21}tw_{22}zw_{12}xw_{21}tw_{22}zw_{23}yw_{31}tw_{32}&\\
 &=w_{11}zw_{12}xw_{21}tw_{22}zw_{12}xw_{21}tw_{22}zw_{12}xw_{21}(tw_{22}zw_{23}yw_{31})^3tw_{32}
                                                       &\text{(by \eqref{eq:2})}\\
 &=w_{11}zw_{12}xw_{21}tw_{22}zw_{12}xw_{21}\underline{tw_{22}zw_{12}xw_{21}tw_{22}zw_{23}yw_{31}t}\times{}&\\
 &\phantom{=}\times w_{22}zw_{23}yw_{31}tw_{22}zw_{23}yw_{31}tw_{32}&\\
 &=\underbrace{w_{11}zw_{12}}_{w_1}xw_{21}tw_{22}zw_{12}xw_{21}tw_{22}zw_{23}
   \underline{\underline{y}}w_{31}tw_{22}zw_{12}\underline{\underline{x}}w_{21}t\times{}&\\
 &\phantom{=}\times w_{22}zw_{23}yw_{31}tw_{22}zw_{23}y\underbrace{w_{31}tw_{32}}_{w_3}
                                                       &\text{(by \eqref{eq:3}).}
\end{align*}
(The factor to which the identity \eqref{eq:3} has been applied is
underlined once while the ``permuted'' occurrences of the
variables $x$ and $y$ are underlined twice.)

\smallskip

\noindent\textbf{\emph{Case 2}.} \emph{Every variable from
$\alf(w_1)\cap\alf(w_2)$ occurs in the word $w_2$ to the right of
every variable from $\alf(w_2)\cap\alf(w_3)$}.

We take some variables $z\in\alf(w_1)\cap\alf(w_2)$ and
$t\in\alf(w_2)\cap\alf(w_3)$. Since both $z$ and $t$ occur in the
word $w_2$ while both $x$ and $y$ do not, the distance between the
right most occurrence of the variable $z$ and the left most
occurrence of the variable $t$ is less than distance between the
right most occurrence of the variable $x$ and the left most
occurrence of the variable $y$. Thus, we can apply the induction
assumption to the word $w$ and the variables $z,t$. This means
that if we write the word $w$ as
$$w=v_1zv_2tv_3,\  \text{ где } \ z\notin\alf(v_2tv_3) \text{ and } t\notin\alf(v_1zv_2),$$
then applying the identities \eqref{eq:2} and \eqref{eq:3}, we can
transform $w$ into a word $v=v_1zv'_2tv_3$ such that
$z,t\in\alf(v'_2)$ and some occurrence of the variable $z$ in
$v'_2$ appears after some occurrence of the variable $t$ in
$v'_2$. However  the word $v$ and the initial variables $x,y$ then
satisfy the condition of Case~1 that is considered above.
\end{proof}

Now we return to the proof of Lemma~\ref{regular elements}. Recall
that we consider a connected word $w$ that begins with a variable
$x$ and ends with a variable $y$ such that $x\ne y$. By
Lemma~\ref{insteadmash} we may assume that some occurrence of the
variable $x$ in $w$ appears after some occurrence of the variable
$y$ in $w$. Hence
\begin{align*}
w&=xw_1yw_2xw_3y=(xw_1yw_2)^3xw_3y &\text{(by \eqref{eq:2})}\\
 &=xw_1yw_2xw_1yw_2xw_1yw_2xw_3y&\\
 &=xw_1yw_2xw_1yw_2xw_1(yw_2xw_3)^3y &\text{(by \eqref{eq:2})}\\
 &=xw_1yw_2xw_1\underline{yw_2xw_1yw_2xw_3y}w_2xw_3yw_2xw_3y&\\
 &=xw_1\underline{yw_2xw_1yw_2xw_3y}w_2xw_1yw_2xw_3yw_2xw_3y &\text{(by \eqref{eq:3})}\\
 &=xw_1yw_2xw_3yw_2xw_3\underline{yw_2xw_1yw_2xw_3y}w_2xw_3y &\text{(by \eqref{eq:3})}\\
 &=\underbrace{xw_1yw_2xw_3y}_{w}\underbrace{w_2xw_1yw_2xw_3yw_2}_{w'}\underbrace{xw_1yw_2xw_3y}_{w} &\text{(by \eqref{eq:3}).}
\end{align*}
(The factors to which the identity \eqref{eq:3} has been applied
are underlined.) Thus, we have deduced an identity of the form
вида $w=ww'w$ from \eqref{eq:1}--\eqref{eq:3}, as required.
\end{proof}

\begin{note}
Lemma~\ref{regular elements} is a partial case of a similar result
claimed by Mashevitsky in~\cite[Lemma~6]{Mash91}, see
also~\cite[Lemma~7]{Mash94}. As we  have already mentioned, this
result has been used (with reference to~\cite{Mash91}) in several
important papers, in particular, \cite{Ha97} and~\cite{Mash97}.
However, its proof in~\cite{Mash91} contains a fatal flaw and so
does the English translation of the proof published
in~\cite{Mash94}. Namely, in~\cite{Mash91} Lemma~6 is deduced from
Lemma~5 which claims that every word $u$ covered by cycles can be
transformed modulo certain identities into a word of the form
$z_1u_1z_1\cdots z_ku_kz_k$ where $z_1,\dots,z_k$ are variables
and $z_{i+1}\in\alf(u_i)$ for all $i=1,\dots,k-1$ provided that
$k>1$. In order to justify the latter claim, Mashevitsky inducts
on $|\alf(u)|$ but in the course of the proof he illegitimately
applies the induction assumption to a factor that in general is
not covered by its cycles. The word $xyxzy$ can be used as a
concrete counter example showing that the argument
from~\cite{Mash91} does not work: here the induction assumption
should have been applied to the factor $zy$ which is certainly not
covered by its cycles.

We observe that our proof of Lemma~\ref{regular elements} invokes
only the identities \eqref{eq:1}--\eqref{eq:3}. Some modification
of our argument applies also to the identities considered
in~\cite{Mash91} and allows one to prove Lemma~6 of~\cite{Mash91}.
Thus, results of \cite{Ha97} and~\cite{Mash97} that rely on the
lemma are correct. Moreover, the third author of the present paper
has recently proved that already the identities \eqref{eq:1} and
\eqref{eq:2} suffice to ensure that the value of every connected
word is regular; an analogous generalization also holds in the
situation considered in~\cite{Mash91}.
\end{note}

A semigroup $S$ is called \emph{$E$-separable} if for every pair
$p,q$ of distinct elements in $S$, there exist idempotents $e,f\in
S$ such that $pe\neq qe$ and $fp\neq fq$.
\begin{lemma}
\label{separability} The semigroup $A{\kern-1pt}C_2$ is
$E$-separable.
\end{lemma}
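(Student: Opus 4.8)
The plan is to verify the separability condition directly, element by element, by exhibiting the relevant idempotents. First I would list the idempotents of $A{\kern-1pt}C_2$. Inside $A_2=\{a,b,ab,ba,0\}$ the idempotents are $a,ab,ba$ together with the element $0$ (since $0^2=0$ in $A_2$); note that $c$ is not idempotent because $c^2=0\ne c$, and $0$ remains idempotent in $A{\kern-1pt}C_2$ since multiplication on $A_2$ is unchanged. So the idempotent set is $E=\{a,ab,ba,0\}$. The key structural fact I would use is the embedding $A{\kern-1pt}C_2\hookrightarrow A_2\times C_2$ sending $x\mapsto(x,1)$ for $x\in A_2$ and $0\mapsto(0,c)$ \emph{(viewing $0$ of $A{\kern-1pt}C_2$ as the element we also call $c$ — I should be careful with names here; let me instead just argue inside $A{\kern-1pt}C_2$ itself)}.

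Here is the cleaner approach: argue directly in $A{\kern-1pt}C_2=\{a,b,ab,ba,0,c\}$. Take two distinct elements $p,q$. I split into cases according to whether $\{p,q\}$ meets $\{c\}$. If $p,q\in A_2$, then since $A_2$ is itself $E$-separable (this is a known and easily checked property of $A_2$: one checks on the five elements using the idempotents $a,ab,ba$ — e.g.\ $a\cdot a=a$ but $b\cdot a=ba$, $ab\cdot a=a$, $ba\cdot a=ba$, $0\cdot a=0$, etc., and symmetrically on the other side), there are idempotents $e,f\in A_2\subseteq A{\kern-1pt}C_2$ with $pe\ne qe$ and $fp\ne fq$, and these idempotents still work in $A{\kern-1pt}C_2$ because the multiplication on $A_2$ is a subsemigroup. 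The remaining case is when one of $p,q$, say $q$, equals $c$. Then $p\in\{a,b,ab,ba,0\}$. Choose the idempotent $e=a$ (or any idempotent in $A_2$): we have $qe=ce=c$, while $pe=pa\in A_2$, so $pe\ne qe=c$ since $A_2$ is disjoint from $\{c\}$; symmetrically $fq=c\ne fp$ for $f=a$. Thus $E$-separability holds in every case.

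The main obstacle — really the only point requiring care — is the bookkeeping in the case $p,q\in A_2$: one must confirm that $A_2$ is $E$-separable, i.e.\ that the three idempotents $a,ab,ba$ genuinely separate all $\binom{5}{2}=10$ pairs on both sides. I expect this to come down to the observation that in $A_2$ the right translations by $a$ and by $b$ (note $b=b\cdot(ab)$ is obtained from idempotents, or one uses $ba$ and $ab$ directly) together distinguish all elements, which is transparent from the matrix representation of $A_2$: right multiplication by the idempotents $\left(\begin{smallmatrix}1&0\\0&0\end{smallmatrix}\right)$ and $\left(\begin{smallmatrix}0&1\\0&1\end{smallmatrix}\right)$ reads off, respectively, the first and second columns of a matrix, and a $2\times2$ matrix from $A_2$ is determined by its two columns; left multiplication is dual. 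Hence $A_2$ is $E$-separable, and the argument above completes the proof.
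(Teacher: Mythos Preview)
Your proof is correct and is essentially the same as the paper's: the paper also verifies the definition directly by exhibiting separating idempotents, presenting the check as a short table that (like your argument) first isolates the pair $(c,x)$ with $x\in A_2$ and then handles the pairs inside $A_2$. Your reduction to ``$A_2$ is $E$-separable'' plus the absorption rule $ce=c$ is a clean way to organize the same computation; one small imprecision is that right multiplication by $\left(\begin{smallmatrix}0&1\\0&1\end{smallmatrix}\right)$ does not literally read off the second column but rather the sum of the two columns---this is harmless, since together with the first-column projection it still determines the matrix, so your conclusion stands.
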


\begin{proof}
This amounts to filling out the following table where for each
pair $p,q$ of distinct elements in the semigroup
$A{\kern-1pt}C_2$, we exhibit some idempotents $e$ and $f$ that
separate $p$ and $q$ respectively on the right and on the left.
\end{proof}

\begin{center}
\begin{tabular}{|c||c|c|c|c|c|c|c|c|c|}
\hline
$p$  & $c$        & 0                        & $a$  & $a$  & $a$  & $ab$ & $ab$ & $ba$\\
\hline
$q$  & $x\in A_2$ & $y\in A_2\setminus\{0\}$ & $b$  & $ab$ & $ba$ & $b$  & $ba$ & $b$\\
\hline
\hline
$e$  & 0          & $a$                      & $a$  & $ba$ & $a$  & $a$  & $a$  & $ba$\\
\hline
$pe$ & $c$        & 0                        & $a$  & $a$  & $a$  & $a$  & $a$  & $ba$\\
\hline
$qe$ & 0          & $a$ or $ba$             & $ba$ & 0    & $ba$ & $ba$ & $ba$ & 0\\
\hline
\hline
$f$  & 0          & $a$                      & $a$  & $a$  & $ab$ & $ab$ & $a$  & $ba$\\
\hline
$fp$ & $c$        & 0                        & $a$  & $a$  & $a$  & $ab$ & $ab$ & $ba$\\
\hline
$fq$ & 0          & $a$ or $ab$             & $ab$ & $ab$ &  0   & 0    & $a$  & $b$\\
\hline
\end{tabular}
\end{center}

The next result that we need is the union of the first part of
Proposition~3.2 in~\cite{LeeVolkov06} with the dual statement. By
$A_0$ we denote the subsemigroup $A_2\setminus\{a\}=\{b,ab,ba,0\}$
of the semigroup $A_2$.

\begin{lemma}
\label{connect} Let $S$ be an $E$-separable semigroup and
$A_0\in\var S$. Suppose that $S$ satisfies an identity $u=v$ such
that the word $u$ can be represented as $u_1u_2$ with
$\alf(u_1)\cap\alf(u_2) = \varnothing$. Then the word $v$ can be
represented as $v_1v_2$ such that $\alf(v_1) = \alf(u_1)$,
$\alf(v_2) = \alf(u_2)$ and the semigroup $S$ satisfies the
identities $u_1 = v_1$ and $u_2 = v_2$.
\end{lemma}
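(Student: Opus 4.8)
The plan is to establish the statement in three stages: (1) pin down the ``shape'' of $v$, i.e.\ show that $v$ has a prefix using exactly the variables of $u_1$ followed by a suffix using exactly the variables of $u_2$; (2) use symmetry in $u$ and $v$ to upgrade containments of alphabets to equalities; (3) deduce the two separate identities $u_1=v_1$ and $u_2=v_2$ from $E$-separability. One may assume $u_1$ and $u_2$ are both non-empty, as otherwise the claim is vacuous. Write $X=\alf(u_1)$, $Y=\alf(u_2)$, so $X\cap Y=\varnothing$, and $Z=\alf(v)\setminus(X\cup Y)$. The decisive point is that, since $A_0\in\var S$, the identity $u=v$ holds in $A_0=\{b,ab,ba,0\}$, which is small enough to be interrogated by explicit substitutions: $ab$ and $ba$ are idempotents, $(ba)(ab)=b$ while $(ab)(ba)=0$, and therefore a product of copies of $ab$ and $ba$ is non-zero precisely when every factor equal to $ba$ occurs before every factor equal to $ab$, in which case it equals $b$ (resp.\ $ba$, resp.\ $ab$) according as both kinds of factor occur (resp.\ only $ba$'s, resp.\ only $ab$'s).

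For stage (1) I would substitute $ba$ for every variable of $X$ and $ab$ for every variable of $Y$, and also $ba$ for the variables of $Z$. Since $u=u_1u_2$ with $u_1$ over $X$ and $u_2$ over $Y$, this sends $u$ to $(ba)(ab)=b$, hence $v$ to $b$ as well. Reading off when a $\{ab,ba\}$-word equals $b$ forces all occurrences in $v$ of variables from $X\cup Z$ to precede all occurrences of variables from $Y$, with at least one occurrence of each kind; thus $v=v'v_2$ with $\alf(v')\subseteq X\cup Z$ and $\alf(v_2)\subseteq Y$, both factors non-empty. To rule out a variable $z\in Z$ occurring in $v'$, I substitute $ab$ for $z$ and $ba$ for every other variable: then $u$ is sent to $ba$ but $v$ is sent to $0$, because the occurrence of $z$, now $ab$, is followed by the first letter of $v_2$, now $ba$. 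This contradiction gives $Z=\varnothing$ and $v=v_1v_2$ with $\alf(v_1)\subseteq\alf(u_1)$ and $\alf(v_2)\subseteq\alf(u_2)$.

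For stage (2), note that the hypotheses, and what has been proved, are symmetric in $u$ and $v$: applying stage (1) to the identity $v=u$ together with the disjoint decomposition $v=v_1v_2$ yields $\alf(u)\subseteq\alf(v)$. Together with the inclusion $\alf(v)\subseteq X\cup Y=\alf(u)$ obtained above this gives $\alf(u)=\alf(v)$, and then $\alf(v_1)\cup\alf(v_2)=X\cup Y$ with $\alf(v_1)\subseteq X$, $\alf(v_2)\subseteq Y$, $X\cap Y=\varnothing$ forces $\alf(v_1)=\alf(u_1)$ and $\alf(v_2)=\alf(u_2)$. For stage (3), fix a substitution $\varphi$ of the variables of $X$ into $S$ and an idempotent $e\in S$, and extend $\varphi$ to $\alf(u)=\alf(v)=X\cup Y$ by sending every variable of $Y$ to $e$. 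Since $u_2$ and $v_2$ are non-empty words over $Y$, evaluating $u=v$ under this substitution yields $\varphi(u_1)\,e=\varphi(v_1)\,e$; letting $e$ run over all idempotents of $S$, the right-hand half of $E$-separability applied to the pair $\varphi(u_1),\varphi(v_1)$ gives $\varphi(u_1)=\varphi(v_1)$, so $S$ satisfies $u_1=v_1$. The identity $u_2=v_2$ follows dually, multiplying by idempotents on the left.

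The step I expect to be the crux is stage (1): one has to extract the whole combinatorial content — that $v$ carries no foreign variables and that its $u_1$-variables all precede its $u_2$-variables — from the four-element semigroup $A_0$ alone, so the substitutions into $\{ab,ba\}$ have to be chosen with care. Everything after the shape of $v$ is settled is bookkeeping plus a standard use of $E$-separability.
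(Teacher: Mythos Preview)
The paper does not prove this lemma: it simply records the statement as ``the union of the first part of Proposition~3.2 in~\cite{LeeVolkov06} with the dual statement,'' so there is no in-paper argument to compare your attempt against.

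Your proof is correct and is the natural one. Stage~(1) extracts the shape of $v$ from $A_0$ via the two idempotents $ab,ba$ satisfying $(ab)(ba)=0\ne(ba)(ab)=b$; stage~(2) is the obvious symmetry; stage~(3) is the routine use of $E$-separability to cancel the idempotent padding on one side. Two minor points of phrasing, neither a genuine gap: in the $Z$-elimination step the occurrence of $z$ need not be \emph{immediately} followed by the first letter of $v_2$, but your product rule for $\{ab,ba\}$-words in $A_0$ still yields $0$ since some $ab$ precedes some $ba$; and the case where $u_1$ or $u_2$ is empty is not literally ``vacuous'' --- one still needs $\alf(u)=\alf(v)$ --- though your $Z=\varnothing$ argument, read in that degenerate setting, supplies exactly this.
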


The next lemma is borrowed from~\cite{Ha97}, see Lemma~3.2 there.

\begin{lemma}
\label{kublanovsky} If for some $n\ge 1$ a semigroup $S$ satisfies
the identities
\begin{equation}
\label{RSn}
 x^2=x^{n+2},\ xyx=(xy)^{n+1}x ,\ xyx(zx)^n = x(zx)^nyx,
\end{equation}
then for every pair of distinct regular elements $p,q \in S$ there
exist a completely 0-simple semigroup $K$ and a surjective
homomorphism $\chi:S\rightarrow K$ such that $p\chi \ne q\chi$.
\end{lemma}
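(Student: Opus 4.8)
The plan is to separate $p$ and $q$ by projecting $S$ onto the principal factor of a single regular $\mathcal{J}$-class and then to turn this projection into an honest surjective homomorphism with the help of the identities \eqref{RSn}. The first thing I would extract is periodicity: the identity $x^2=x^{n+2}$ forces $S$ to be periodic, hence \emph{stable}, so that $\mathcal{J}=\mathcal{D}$ and one has the stability equivalences $a\,\mathcal{J}\,ab\iff a\,\mathcal{R}\,ab$ together with the left-right dual. Consequently every regular $\mathcal{J}$-class $J$ is a $\mathcal{D}$-class whose principal factor $J^0$ is completely $0$-simple, so by Rees's theorem $J^0\cong M^0(G;I,\Lambda;P)$; evaluating $x^2=x^{n+2}$ inside a structure group $G=H_e$ (where it reads $x^n=e$) shows that $G$ has exponent dividing $n$. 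Thus each regular class already \emph{is} a completely $0$-simple semigroup of the required kind, and the whole issue is to promote its principal factor to a genuine quotient of $S$.

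Second, I would reduce the statement to a single regular class. It suffices to construct, for each regular element $p$, a completely $0$-simple semigroup $K_p$ and a surjective homomorphism $\chi_p\colon S\to K_p$ that is injective on $J_p$ and sends every element outside $J_p$ to $0$. Granting this, any two distinct regular elements $p,q$ are separated: if $J_p=J_q$ then $\chi_p$ is injective on their common class, so $p\chi_p\ne q\chi_p$; and if $J_p\ne J_q$ then $q\chi_p=0\ne p\chi_p$ since $p\in J_p$ is not sent to $0$. Hence one single map $\chi_p$ handles all pairs in which $p$ participates.

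Third, I would build $\chi_p$ from Rees coordinates. Fix $p$ and the completely $0$-simple principal factor $J_p^0\cong M^0(G;I,\Lambda;P)$, and define $\chi_p(s)$ to be the Rees triple of $s$ when $s\in J_p$ and $0$ otherwise. Inside $J_p^0$ the multiplication rule of a Rees matrix semigroup makes $\chi_p$ respect products, so the homomorphism condition $(st)\chi_p=(s\chi_p)(t\chi_p)$ reduces to the mixed situation in which exactly one factor, say $s$, lies in $J_p$ while $t\notin J_p$. There $st\le_{\mathcal{R}}s$ and $st\le_{\mathcal{J}}t$; if $t$ is not $\mathcal{J}$-above $p$ then $st\notin J_p$ and both sides vanish, so the entire difficulty is concentrated in the case where $t$ is strictly $\mathcal{J}$-above $J_p$ and yet $st\in J_p$ for some $s\in J_p$ (together with its dual).

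The hard part is to exclude exactly this last configuration, and this is where \eqref{RSn} is indispensable. The forbidden pattern is precisely the one realized by the adjoined identity of the monoid $A_2^1$ (there $s\cdot 1=s\in J_p$ with $1$ strictly above), and $A_2^1$ fails \eqref{RSn}, since $xyx=(xy)^{n+1}x$ with $x=1$ forces $y=y^{n+1}$, which is false for $y=b$ as $b^{n+1}=0\ne b$. The plan is to assume such a $t$ exists, use stability to locate $st$ in the same $\mathcal{R}$-class as $s$ (and dually $ts$ in the same $\mathcal{L}$-class), so that $t$ induces a nonzero right (resp.\ left) translation of $J_p^0$, and then to substitute $t$ together with suitable regular elements of $J_p$ into $xyx=(xy)^{n+1}x$ and the exchange identity $xyx(zx)^n=x(zx)^nyx$, manipulating exactly as in the proof of Lemma~\ref{regular elements} (which already used \eqref{eq:2} and \eqref{eq:3} for such rewrites), to force $t$ itself into $J_p$ --- contradicting that $t$ is strictly above. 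Once this is done, $\chi_p$ is a surjective homomorphism onto the completely $0$-simple $K_p=J_p^0$, injective on $J_p$, which finishes the proof. I expect this exclusion step --- tracking Rees coordinates through the exchange identity to show that the offending translation cannot occur strictly above $J_p$ --- to be the main obstacle; the remainder is routine bookkeeping with Green's relations and Rees's theorem.
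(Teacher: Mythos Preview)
The paper does not give its own proof of this lemma: it is quoted from \cite{Ha97}, Lemma~3.2, so there is nothing in the paper to compare your argument against directly.

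That said, your proposal has a genuine gap. The map $\chi_p$ you define (identity on $J_p$, zero elsewhere) is \emph{not} a homomorphism in general, and your plan to rescue it by showing that no $t$ strictly $\mathcal{J}$-above $J_p$ can satisfy $st\in J_p$ for some $s\in J_p$ is simply false. Take the two-element semilattice $S=\{e,f\}$ with $e>f$ (so $ef=fe=f$, $e^2=e$, $f^2=f$): it is a commutative band, hence satisfies all three identities in \eqref{RSn} for every $n\ge 1$, yet $e$ lies strictly above $J_f=\{f\}$ and $ef=f\in J_f$. Your map $\chi_f$ would give $(ef)\chi_f=f$ but $(e\chi_f)(f\chi_f)=0\cdot f=0$. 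The heuristic you invoke --- that $A_2^1$ realizes the offending configuration and fails \eqref{RSn} --- only shows that one particular carrier of the configuration is excluded, not that the configuration itself is; the semilattice carries it too and satisfies \eqref{RSn}.

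What the identities actually buy is subtler. After passing to the Rees quotient by the ideal of elements strictly $\mathcal{J}$-below $J_p$, the set $J_p^0$ becomes a $0$-minimal ideal, and every element of $S$ acts on $J_p^0$ as a bitranslation; the role of \eqref{RSn} is to force each such bitranslation to be \emph{inner}, i.e.\ to coincide with the two-sided action of some element of $J_p^0$ itself. This yields a genuine surjective homomorphism $S\to J_p^0$ that restricts to the identity on $J_p$ but need not annihilate elements above $J_p$ (in the semilattice it sends $e\mapsto f$). Consequently your reduction ``injective on $J_p$ and zero elsewhere'' is too strong and must be weakened: you do obtain, for each regular $p$, a surjection onto $J_p^0$ that is injective on $J_p$, but when $J_p\ne J_q$ this map may well identify $p$ and $q$, so separating them still requires a short additional argument (e.g.\ choosing the class so that the other element already lies in the ideal killed at the first step, which is possible since $\le_\mathcal{J}$ is a partial order on $\mathcal{J}$-classes).
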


The last ingredient of our proof is a well-known result by
Houghton~\cite[Theorem~5.1]{Hough77} formulated in a convenient
for us way.
\begin{lemma}
\label{houghton} If the idempotents of a completely 0-simple
semigroup $S$ generate a combinatorial subsemigroup, then $S$ can
be presented as the Rees matrix semigroup $M^0(G; I, \Lambda; P)$
over a group $G$ such that every entry of the sand\-wich-matrix
$P$ is equal to either zero or the identity element of $G$.
\end{lemma}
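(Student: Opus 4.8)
The plan is to exploit the non-uniqueness of the Rees coordinatization of a completely $0$-simple semigroup together with a graph-theoretic reading of the sandwich matrix. By the Rees theorem I first write $S\cong M^0(G;I,\Lambda;P)$ for a group $G$ and a regular sandwich matrix $P=(p_{\lambda,i})$ (no zero row or column). Recall the \emph{admissible transformations}: for arbitrary families $(a_i)_{i\in I}$ and $(b_\lambda)_{\lambda\in\Lambda}$ of elements of $G$, the map $(i,g,\lambda)\mapsto(i,a_i g b_\lambda,\lambda)$, $0\mapsto 0$, is an isomorphism of $S$ onto the Rees matrix semigroup with the same $G,I,\Lambda$ but sandwich matrix $p'_{\lambda,i}=b_\lambda^{-1}p_{\lambda,i}a_i^{-1}$ (with a zero wherever $p_{\lambda,i}=0$). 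My goal is to choose the $a_i,b_\lambda$ so that $p'_{\lambda,i}\in\{0,1\}$ for all $\lambda,i$, where $1$ is the identity of $G$; this is exactly the assertion of the lemma, with the same group $G$ retained.

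Next I encode $P$ as a bipartite graph $\Gamma$ with vertex set $I\sqcup\Lambda$, drawing an edge between $i$ and $\lambda$ labelled $p_{\lambda,i}$ whenever $p_{\lambda,i}\ne0$; regularity of $P$ says $\Gamma$ has no isolated vertices. The admissible transformations act on $\Gamma$ as gauge transformations of the edge labels by vertex potentials $a_i,b_\lambda$. I fix a spanning forest $F$ of $\Gamma$ and, proceeding outward from one base vertex per connected component, choose the potentials so that every edge of $F$ acquires label $1$. The content of the lemma then reduces to showing that, after this gauge fixing, every non-forest edge also has label $1$; equivalently, that the \emph{holonomy} of every closed walk in $\Gamma$ --- the alternating product $p_{\lambda_0,i_0}^{-1}p_{\lambda_0,i_1}p_{\lambda_1,i_1}^{-1}\cdots$ of edge labels read around the walk --- is trivial, since the label of a non-forest edge equals the holonomy of the fundamental cycle it closes.

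The heart of the argument, and the step I expect to be the main obstacle, is to connect these holonomies to the combinatorial hypothesis. The nonzero idempotents of $S$ are precisely the elements $e_{\lambda,i}=(i,p_{\lambda,i}^{-1},\lambda)$ with $p_{\lambda,i}\ne0$, and a direct computation shows that a product of idempotents running along a closed walk, e.g.\ $e_{\lambda,i}e_{\mu,j}e_{\nu,k}\cdots e_{\lambda,\ell}$ that returns to the group $\mathcal H$-class $H_{\lambda,i}$, equals $(i,p_{\lambda,i}^{-1}c,\lambda)$, where $c\in G$ is exactly the holonomy of the corresponding closed walk based at $\lambda$. Thus every closed-walk holonomy is realised as the ``group displacement'' of some element $x$ of the idempotent-generated subsemigroup $T$ lying in $H_{\lambda,i}$. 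Since (for the finite semigroups to which we apply the lemma) $x$ then lies in a genuine subgroup of $T$, namely the maximal subgroup of $T$ at $e_{\lambda,i}$ --- because $x^{-1}$ is a power of $x$ and hence again a product of idempotents --- the combinatoriality of $T$ forces $x=e_{\lambda,i}$, i.e.\ $c=1$. Hence all holonomies vanish.

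Assembling the pieces: with every closed-walk holonomy trivial, the gauge fixing along $F$ drives every non-forest label to $1$ as well, so the transformed sandwich matrix $P'$ has all its nonzero entries equal to the identity of $G$. This produces the required presentation $S\cong M^0(G;I,\Lambda;P')$ and proves the lemma. The only delicate points are the bookkeeping verifying that idempotent products traverse an arbitrary closed walk with the claimed holonomy (longer walks, not merely the ``rectangle'' $4$-cycles, are needed to capture the full holonomy subgroup), and the observation that an idempotent product sitting in a group $\mathcal H$-class genuinely belongs to a subgroup of $T$, which is where finiteness enters.
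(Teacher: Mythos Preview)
The paper does not supply its own proof of this lemma; it is quoted from Houghton~\cite{Hough77} as Theorem~5.1 there and used as a black box in the proof of Theorem~\ref{basis1}. Your argument is precisely the standard Graham--Houghton proof: normalise the sandwich matrix along a spanning forest by admissible row and column scalings, identify each residual non-tree entry with the holonomy of its fundamental cycle, and realise that holonomy as the $G$-displacement of a product of idempotents lying in a group $\mathcal H$-class of $S$. So there is nothing to compare against, and your route is the expected one.

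One point should be tightened. The lemma is stated with no finiteness hypothesis, and in the proof of Theorem~\ref{basis1} it is applied to a completely $0$-simple homomorphic image of an \emph{arbitrary} semigroup $S\in\mathcal V$, which need not be finite. Your step ``$x^{-1}$ is a power of $x$'' actually uses periodicity, not finiteness; since every semigroup in $\mathcal V$ satisfies \eqref{eq:1} and is therefore periodic, this already suffices for the application in the paper. If you want the lemma in the generality in which it is stated, drop the torsion shortcut and observe instead that traversing the same closed walk in the opposite direction yields another product of idempotents (built from the ``between'' edges $\lambda_k\text{--}i_{k+1}$ together with the closing edge $\lambda_n\text{--}i_1$) lying in the same group $\mathcal H$-class with $G$-displacement $c^{-1}$. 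Hence $T\cap H_{e_{\lambda,i}}$ is a genuine subgroup of $H_{e_{\lambda,i}}$, not merely a submonoid, and combinatoriality of $T$ forces it to equal $\{e_{\lambda,i}\}$, i.e.\ $c=1$, with no finiteness needed.
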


\begin{proof1}
Recall that we have denoted by $\mathcal{V}$ the variety defined
by the identities \eqref{eq:1}--\eqref{eq:4}. By
Corollary~\ref{basis} we have the inclusion $\var
A{\kern-1pt}C_2\subseteq\mathcal{V}$. Arguing by contradiction,
assume that this inclusion is strict. Then there exists an
identity that holds in the semigroup $A{\kern-1pt}C_2$ but fails
in the variety $\mathcal{V}$. Among all such identities, we chose
an identity $u=v$ with the minimum possible number of variables in
the word $u$. We aim to show that the words $u$ and $v$ must be
connected.

Assume for the moment that, say, $u$ is not connected. This means
that it can be decomposed as $u=u_1u_2$ with
$\alf(u_1)\cap\alf(u_2)=\varnothing$. By Lemma~\ref{separability}
the semigroup $A{\kern-1pt}C_2$ is $E$-separable, and since
$A{\kern-1pt}C_2$ obviously contains the semigroup $A_0$ as a
subsemigroup, we see that Lemma~\ref{connect} applies to
$A{\kern-1pt}C_2$. By this lemma we have $v=v_1v_2$ where $\alf
(v_1) =\alf (u_1)$, $ \alf (v_2) = \alf (u_2)$ and both $u_1 =
v_1$ and $u_2 = v_2$ hold in the semigroup $A{\kern-1pt}C_2$.
Since $|\alf (u_1)|, |\alf (u_2)| < |\alf (u)|$, the choice of the
identity $u=v$ ensures that the identities $u_1=v_1$ and $u_2=v_2$
hold in the variety $\mathcal{V}$. Clearly, the identity $u=v$ is
a consequence of these two identities whence it also must hold in
$\mathcal{V}$, a contradiction. Analogously, one checks that the
word $v$ must be connected.

Now let $S$ be a semigroup in $\mathcal{V}$ such that the words
$u$ and $v$ take distinct values $p$ and $q$ under some
interpretation of variables. By Lemma~\ref{regular elements} these
values are regular elements. If we compare the identities that
define the  variety $\mathcal{V}$ with the three identities
\eqref{RSn} from the premise of Lemma~\ref{kublanovsky}, we see
that for $n=2$ the first two of the three identities coincide with
the identities \eqref{eq:1} and \eqref{eq:2} respectively while
the third one readily follows from the identity \eqref{eq:3}.
Thus, Lemma~\ref{kublanovsky} applies to the semigroup $S$ and its
regular elements $p$ and $q$. Therefore there exist a completely
0-simple semigroup $K$ and a surjective homomorphism
$\chi:S\rightarrow K$ such that $p\chi \ne q\chi$. Observe that
the elements $p\chi$ and $q\chi$ are also values of the words $u$
and $v$ under some interpretation of variables whence the identity
$u=v$ fails in the semigroup $K$. On the other hand, the semigroup
$K$ belongs to the variety $\mathcal{V}$ because it is a
homomorphic image of the semigroup $S\in\mathcal{V}$. This means
that we can use $K$ instead of $S$; in other words, we may (and
will) assume that the semigroup $S$ from the ``gap'' between the
varieties $\var A{\kern-1pt}C_2$ and $\mathcal{V}$ is completely
0-simple.

By Proposition~\ref{aperiodic core} the idempotents of $S$
generate a combinatorial subsemigroup, but then
Lemma~\ref{houghton} implies that $S$ can be presented as the Rees
matrix semigroup $M^0(G; I, \Lambda; P)$ over a group $G$ such
that every entry of the sandwich-matrix $P$ is equal to either
zero or the identity element of $G$. Let $T$ be the Rees matrix
semigroup  $M^0(E; I, \Lambda; P)$ over the trivial group
$E=\{1\}$ with the same sandwich-matrix $P$. It is known (see,
e.g., \cite[Proposition~1.2]{LeeVolkov06}) that every completely
0-simple semigroup over the trivial group belongs to the variety
generated by the semigroup $A_2$; in particular, $T\in\var A_2$.
Further, the group $G$ is isomorphic изоморфна to a maximal
subgroup in $S$ whence $G\in\mathcal{V}$. Therefore $G$ satisfies
the identity~\eqref{eq:1} and hence $G$ is a group of exponent~2.
It is well-known that every group of exponent~2 belongs to the
variety generated by the group $C_2$; in particular, $G\in\var
C_2$.

It is easy to verify that the mapping $T\times G\to S$ that sends
the pair $\bigl((i,1,\lambda),g\bigr)\in T\times G$ to the element
$(i,g,\lambda)\in S$ is a surjective homomorphism. Since $T\in\var
A_2$ and $G\in\var C_2$, we have
$$T\times G\in\var(A_2\times C_2)=\var A{\kern-1pt}C_2,$$
whence $S\in\var A{\kern-1pt}C_2$. This contradicts the choice of
the semigroup $S$. The theorem is proved.
\end{proof1}

\section{A polynomial algorithm for \VM{$A{\kern-1pt}C_2$}}

Given a semigroup $S$ with $|S|=n$, we want to test whether or not
$S$ belongs to the variety $\var A{\kern-1pt}C_2$. For this, by
Theorem~\ref{basis1}, it is necessary and sufficient to test
whether or not $S$ satisfies the identities
\eqref{eq:1}--\eqref{eq:4}. Testing the identities
\eqref{eq:1}--\eqref{eq:3} requires $O(n^3)$ time, see the
argument in the proof of Lemma~\ref{lemma 1.1}. No straightforward
test for the infinite identity series \eqref{eq:4} is possible but
here we can use the structural equivalent from
Proposition~\ref{aperiodic core}: it is necessary and sufficient
to test whether or not the subsemigroup of the semigroup $S$
generated by all idempotents of $S$ is combinatorial. We will show
that this can also be tested in $O(n^3)$ time.

Calculating squares of all elements of the semigroup $S$, we can
find the set of all idempotents in $S$ in $O(n)$ time. Let $T_1$
be this set and define inductively $T_{i+1}=T_iT_1$. It is clear
that constructing each set $T_{i+1}$ requires at most $n^2$ steps.
Further, it is easy to see that $T_i\subseteq T_{i+1}$ and that if
$T_k=T_{k+1}$ for some $k$, then $T_k=T_{k+\ell}$ for all $\ell$
whence $T_k$ is a subsemigroup in $S$. By the construction, every
element in $T_k$ is a product of idempotents, and therefore, $T_k$
coincides with the subsemigroup of the semigroup $S$ generated by
all idempotents of $S$. Since no strictly increasing chain of
subsets of $S$ can contain more than $n$ subsets, we have $k\le
n$, and the subsemigroup $T_k$ will be constructed this way in
$O(n^3)$ time. Now it remains to test whether or not $T_k$ is
combinatorial and for this it is necessary and sufficient to test
whether or not $T_k$ satisfies the identity \eqref{eq:5}, see the
proof of Proposition~\ref{aperiodic core}. This last check can be
done in $O(n)$ time.

Thus, we have proved the main result of the present paper:

\begin{thm}
The $6$-element semigroup $A{\kern-1pt}C_2$ has no finite identity
basis but, given a finite semigroup $S$, one can test the
membership of $S$ in the variety  $\var A{\kern-1pt}C_2$ in
$O(|S|^3)$ time.
\end{thm}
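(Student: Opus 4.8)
The plan is to combine the two halves of the statement, each of which has essentially already been assembled from the lemmas proved above. For the non-finite-basis half, I would simply invoke Lemma~\ref{nfb}, which asserts that $A{\kern-1pt}C_2$ is non\-finitely based; no further work is needed there. The content of the final theorem is therefore the algorithmic half, and here the strategy is dictated by Theorem~\ref{basis1}: since the identities \eqref{eq:1}--\eqref{eq:4} form an identity basis for $A{\kern-1pt}C_2$, a finite semigroup $S$ lies in $\var A{\kern-1pt}C_2$ if and only if $S$ satisfies all of \eqref{eq:1}--\eqref{eq:4}. So I would split the membership test into two subtasks: (i) test the three fixed identities \eqref{eq:1}--\eqref{eq:3}, and (ii) test the infinite family \eqref{eq:4}.

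For subtask (i), the brute-force substitution argument from the proof of Lemma~\ref{lemma 1.1} applies directly. Each of \eqref{eq:1}--\eqref{eq:3} involves at most three variables, so checking whether $S$ satisfies it amounts to evaluating the two sides on all triples of elements of $S$; with $n=|S|$ this is $O(n^3)$ time for each identity, hence $O(n^3)$ overall, since the word lengths are bounded by absolute constants. For subtask (ii), a direct substitution test is impossible because \eqref{eq:4} is an infinite series, so instead I would replace it by the structural criterion of Proposition~\ref{aperiodic core}: in the presence of \eqref{eq:1} (which we have already verified in subtask (i)), $S$ satisfies all of \eqref{eq:4} if and only if the subsemigroup $T$ generated by the idempotents of $S$ is combinatorial, equivalently if and only if $T$ satisfies the identity \eqref{eq:5} $x^2=x^3$.

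The remaining point — and the only place any real care is needed — is to construct $T$ and test \eqref{eq:5} on it within the $O(n^3)$ budget. Here the plan is the standard closure computation: find the idempotents $T_1=\{s\in S: s^2=s\}$ in $O(n)$ time by squaring every element; then form $T_{i+1}=T_iT_1$ iteratively, each step costing $O(n^2)$; observe $T_i\subseteq T_{i+1}$, so the chain stabilizes after at most $n$ steps, and once $T_k=T_{k+1}$ the set $T_k$ is closed under multiplication, hence equals $T$. This gives $T$ in $O(n\cdot n^2)=O(n^3)$ time. Finally, testing whether $T$ satisfies $x^2=x^3$ costs $O(|T|)=O(n)$ time. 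Summing the three contributions yields an $O(n^3)$ algorithm, completing the proof. I do not anticipate a genuine obstacle: all the conceptual work has been done in Theorem~\ref{basis1} and Proposition~\ref{aperiodic core}, and what is left is the routine complexity bookkeeping for the idempotent-closure computation, the mildly delicate part being only the observation that the stabilization length of the chain $T_1\subseteq T_2\subseteq\cdots$ is bounded by $n$.
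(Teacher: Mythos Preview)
Your proposal is correct and follows essentially the same approach as the paper: invoke Lemma~\ref{nfb} for the nonfinite basis part, use Theorem~\ref{basis1} to reduce membership to checking \eqref{eq:1}--\eqref{eq:4}, handle \eqref{eq:1}--\eqref{eq:3} by brute-force substitution in $O(n^3)$, and replace the infinite series \eqref{eq:4} via Proposition~\ref{aperiodic core} by the combinatoriality test on the idempotent-generated subsemigroup, computed through the same chain $T_1\subseteq T_1T_1\subseteq\cdots$ stabilizing in at most $n$ steps. The paper's argument and yours match in structure, in the lemmas invoked, and even in the complexity bookkeeping.
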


\medskip

\noindent\textbf{Acknowledgement.} The first and the second
authors acknowledge support from the Federal Education Agency of
Russia, project 2.1.1/3537, and from the Russian Foundation for
Basic Research, grants 09-01-12142 and 10-01-00524.

\small

\end{document}